\renewcommand{\maketag@@@}[1]{\hbox{\m@th\normalsize\normalfont#1}}%
\def\qed{\nopagebreak\hfill{\rule{4pt}{7pt}}}
\newtheorem{theo}{Theorem}[section]
\newtheorem{lemm}[theo]{Lemma}
\newtheorem{conj}[theo]{Conjecture}
\theoremstyle{remark}
\numberwithin{equation}{section}
\newdimen\Squaresize \Squaresize=11pt
\newdimen\Thickness \Thickness=0.7pt
\def\Square#1{\hbox{\vrule width \Thickness
   \vbox to \Squaresize{\hrule height \Thickness\vss
    \hbox to \Squaresize{\hss#1\hss}
   \vss\hrule height\Thickness}
\unskip\vrule width \Thickness} \kern-\Thickness}
\def\Vsquare#1{\vbox{\Square{$#1$}}\kern-\Thickness}
\def\moins{\raise 1pt\hbox{{$\scriptstyle -$}}}
\begin{document}

\begin{center}
{\Large \bf  Laguerre inequality and determinantal inequality for the broken $k$-diamond partition function}
\end{center}

\begin{center}
Eve Y.Y. Yang\\[8pt]
Department of Mathematics\\
Tianjin University\\
Tianjin 300072, P. R. China\\[6pt]
Email: {\tt yangyaoyao@tju.edu.cn\tt }
\end{center}

\vspace{0.3cm} \noindent{\bf Abstract.} In 2007, Andrews and Paule introduced the broken $k$-diamond partition function $\Delta_{k}(n)$, which has received a lot of researches on the arithmetic propertises. In this paper, we will prove the broken $k$-diamond partition function satisfies the Laguerre inequalities of order $2$ and the determinantal inequalities of order $3$ for $k=1$ or $2$. Moreover, we conjectured the thresholds for the Laguerre inequalities of order $m$ and the positivity of $m$-order determinants for $4\leq m\leq 14$ for the broken $k$-diamond partition function when $k=1$ or $2$.\\

\noindent {\bf Keywords:} determinant, log-concave, broken $k$-diamond partition, Laguerre inequality
\\
\noindent {\bf AMS Classification:} 05A20, 11P82 
\section{Introduction}
The main purpose of this paper is to prove the Laguerre inequality of order $2$ and the determinantal inequality of order $3$ for the broken $k$-diamond partition function. The notion of broken $k$-diamond partitions was introduced by Andrews and Paule \cite{Andrews}. A broken $k$-diamond partition $\pi=(b_2,\ldots,b_{2k+2},\ldots,$ $b_{(2k+1)l+1};a_1,\ldots,a_{2k+2},\ldots a_{(2k+1)l+1})$ is a plane partition satisfying the relations illustrated in Figure \ref{fig-broken-k}, where $a_i$, $b_i$ are non-negative integers and $a_i\rightarrow a_j$ means $a_i\geq a_j$. More precisely, each building block in Figure \ref{fig-broken-k}, except for the broken block $(b_2,b_3,\ldots b_{2k+2})$ has the same order structure as shown in Figure \ref{fig-k-elo-1}. We call each block a $k$-elongated partition diamond of length 1, or a $k$-elongated diamond, for short. It should be noted that the broken block $(b_2,b_3,\ldots b_{2k+2})$ is
also a $k$-elongated partition diamond of length 1 from which a source $b_1$ is deleted.
	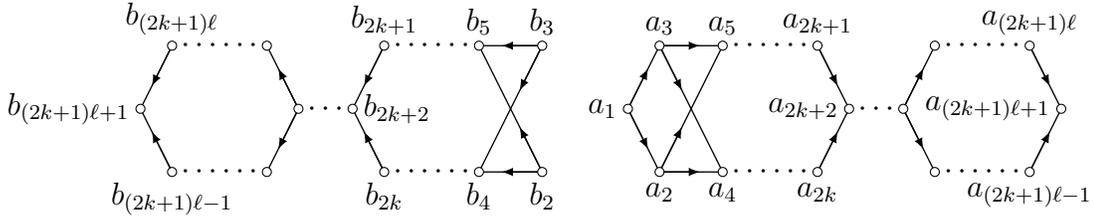
\begin{figure}
	\begin{center}
		\begin{tikzpicture}[scale=0.6]	
			
			\coordinate [label=left:$a_{1}$] (a1) at (2.1,0);
			\draw (a1) circle(.1);
			
			\coordinate[label=above:$a_{5}$] (a5) at (4.2,1.4);
			\draw (a5) circle(.1);
			
			\coordinate [label=below:$a_{4}$] (a4) at (4.2,-1.4);
			\draw (a4) circle(.1);
			
			\coordinate[label=above:$a_{3}$] (a3) at (2.8,1.4);
			\draw (a3) circle(.1);
			
			\coordinate[label=below:$a_{2}$] (a2) at (2.8,-1.4);
			\draw (a2) circle(.1);
			
			\draw[line width=0.5pt] (2.9,1.4)--(4.1,1.4);
			\draw[-latex,line width=0.5pt] (2.9,1.4)--(3.7,1.4);
			
			\draw[line width=0.5pt] (2.9,-1.4)--(4.1,-1.4);
			\draw[-latex,line width=0.5pt] (2.9,-1.4)--(3.7,-1.4);
			
			\node at (4.5,1.4) {$\cdot$};	
			\node at (4.8,1.4) {$\cdot$};	
			\node at (5.1,1.4) {$\cdot$};	
			\node at (5.4,1.4) {$\cdot$};	
			\node at (5.7,1.4) {$\cdot$};	
			\node at (6,1.4) {$\cdot$};
				
			\coordinate [label=above:$a_{2k+1}$] (a2k+1) at (6.3,1.4);
			\draw (a2k+1) circle(.1);
			
			\coordinate [label=below:$a_{2k}$] (a2k) at (6.3,-1.4);
			\draw (a2k) circle(.1);
			
			\node at (4.5,-1.4) {$\cdot$};	
			\node at (4.8,-1.4) {$\cdot$};	
			\node at (5.1,-1.4) {$\cdot$};	
			\node at (5.4,-1.4) {$\cdot$};	
			\node at (5.7,-1.4) {$\cdot$};	
			\node at (6,-1.4) {$\cdot$};
			
			\coordinate [label=left:$a_{2k+2}$] (a2k+2) at (7,0);
			\draw (a2k+2) circle(.1);

			\draw[line width=0.5pt] (2.845,-1.31)--(4.155,1.31);
			\draw[-latex,line width=0.5pt] (2.845,-1.31)--(3.3,-0.4);

			\draw[line width=0.5pt] (2.845,1.31)--(4.155,-1.31);
			\draw[-latex,line width=0.5pt] (2.845,1.31)--(3.3,0.4);
			
			\draw[-latex,line width=0.5pt] (2.145,0.09)--(2.55,0.9);
			\draw[line width=0.5pt] (2.145,0.09)--(2.755,1.31);
			
		    \draw[-latex,line width=0.5pt] (2.145,-0.09)--(2.55,-0.9);
			\draw[line width=0.5pt] (2.145,-0.09)--(2.755,-1.31);
			
			\draw[line width=0.5pt] (6.345,1.31)--(6.955,0.09);
			\draw[-latex,line width=0.5pt] (6.345,1.31)--(6.75,0.5);
			
			\draw[-latex,line width=0.5pt] (6.345,-1.31)--(6.75,-0.5);
			\draw[line width=0.5pt] (6.345,-1.31)--(6.955,-0.09);
			
			\node at (7.3,0) {$\cdot$};	
			\node at (7.6,0) {$\cdot$};	
			\node at (7.9,0) {$\cdot$};

			\draw (8.2,0) circle(.1);

			\draw (8.9,1.4) circle(.1);
			
			\draw (8.9,-1.4) circle(.1);

			\node at (9.2,1.4) {$\cdot$};	
			\node at (9.5,1.4) {$\cdot$};	
			\node at (9.8,1.4) {$\cdot$};	
			\node at (10.1,1.4) {$\cdot$};	
			\node at (10.4,1.4) {$\cdot$};	
			\node at (10.7,1.4) {$\cdot$};
				
			\coordinate [label=above:$a_{(2k+1)\ell}$] (a2k+1n) at (11,1.4);
			\draw (a2k+1n) circle(.1);
			
			\coordinate [label=below:$a_{(2k+1)\ell-1}$] (a2k+1n-1) at (11,-1.4);
			\draw (a2k+1n-1) circle(.1);
			
			\node at (9.2,-1.4) {$\cdot$};	
			\node at (9.5,-1.4) {$\cdot$};	
			\node at (9.8,-1.4) {$\cdot$};	
			\node at (10.1,-1.4) {$\cdot$};	
			\node at (10.4,-1.4) {$\cdot$};	
			\node at (10.7,-1.4) {$\cdot$};
			
			\coordinate [label=left:$a_{(2k+1)\ell+1}$] (a2k+1n+1) at (11.7,0);
			\draw (a2k+1n+1) circle(.1);

			\draw[-latex,line width=0.5pt] (8.245,0.09)--(8.65,0.9);
			\draw[line width=0.5pt] (8.245,0.09)--(8.855,1.31);
			
			\draw[-latex,line width=0.5pt] (8.245,-0.09)--(8.65,-0.9);
			\draw[line width=0.5pt] (8.245,-0.09)--(8.855,-1.31);
			
			\draw[line width=0.5pt] (11.045,1.31)--(11.655,0.09);
			\draw[-latex,line width=0.5pt] (11.045,1.31)--(11.45,0.5);
			
			\draw[-latex,line width=0.5pt] (11.045,-1.31)--(11.45,-0.5);
			\draw[line width=0.5pt] (11.045,-1.31)--(11.655,-0.09);

			\coordinate[label=above:$b_{5}$] (b5) at (-1.2,1.4);
			\draw (b5) circle(.1);
			
			\coordinate [label=below:$b_{4}$] (b4) at (-1.2,-1.4);
			\draw (b4) circle(.1);
			
			\coordinate[label=above:$b_{3}$] (b3) at (0.2,1.4);
			\draw (b3) circle(.1);
			
			\coordinate[label=below:$b_{2}$] (b2) at (0.2,-1.4);
			\draw (b2) circle(.1);
			
			\draw[line width=0.5pt] (0.1,1.4)--(-1.1,1.4);
			\draw[-latex,line width=0.5pt] (0.1,1.4)--(-0.7,1.4);
			
			\draw[line width=0.5pt] (0.1,-1.4)--(-1.1,-1.4);
			\draw[-latex,line width=0.5pt] (0.1,-1.4)--(-0.7,-1.4);
			
			\node at (-1.5,1.4) {$\cdot$};	
			\node at (-1.8,1.4) {$\cdot$};	
			\node at (-2.1,1.4) {$\cdot$};	
			\node at (-2.4,1.4) {$\cdot$};	
			\node at (-2.7,1.4) {$\cdot$};	
			\node at (-3,1.4) {$\cdot$};
				
			\coordinate [label=above:$b_{2k+1}$] (b2k+1) at (-3.3,1.4);
			\draw (b2k+1) circle(.1);
			
			\coordinate [label=below:$b_{2k}$] (b2k) at (-3.3,-1.4);
			\draw (b2k) circle(.1);
			
			\node at (-1.5,-1.4) {$\cdot$};	
			\node at (-1.8,-1.4) {$\cdot$};	
			\node at (-2.1,-1.4) {$\cdot$};	
			\node at (-2.4,-1.4) {$\cdot$};	
			\node at (-2.7,-1.4) {$\cdot$};	
			\node at (-3,-1.4) {$\cdot$};
			
			\coordinate [label=right:$b_{2k+2}$] (b2k+2) at (-4,0);
			\draw (b2k+2) circle(.1);

			\draw[line width=0.5pt] (0.155,-1.31)--(-1.155,1.31);
			\draw[-latex,line width=0.5pt] (0.155,-1.31)--(-0.3,-0.4);

			\draw[line width=0.5pt] (0.155,1.31)--(-1.155,-1.31);
			\draw[-latex,line width=0.5pt] (0.155,1.31)--(-0.3,0.4);

			\draw[line width=0.5pt] (-3.345,1.31)--(-3.955,0.09);
			\draw[-latex,line width=0.5pt] (-3.345,1.31)--(-3.75,0.5);
			
			\draw[-latex,line width=0.5pt] (-3.345,-1.31)--(-3.75,-0.5);
			\draw[line width=0.5pt] (-3.345,-1.31)--(-3.955,-0.09);
			
			\node at (-4.3,0) {$\cdot$};	
			\node at (-4.6,0) {$\cdot$};	
			\node at (-4.9,0) {$\cdot$};

			\draw (-5.2,0) circle(.1);

			\draw (-5.9,1.4) circle(.1);
			
			\draw (-5.9,-1.4) circle(.1);

			\node at (-6.2,1.4) {$\cdot$};	
			\node at (-6.5,1.4) {$\cdot$};	
			\node at (-6.8,1.4) {$\cdot$};	
			\node at (-7.1,1.4) {$\cdot$};	
			\node at (-7.4,1.4) {$\cdot$};	
			\node at (-7.7,1.4) {$\cdot$};
				
			\coordinate [label=above:$b_{(2k+1)\ell}$] (b2k+1n) at (-8,1.4);
			\draw (b2k+1n) circle(.1);
			
			\coordinate [label=below:$b_{(2k+1)\ell-1}$] (b2k+1n-1) at (-8,-1.4);
			\draw (b2k+1n-1) circle(.1);
			
			\node at (-6.2,-1.4) {$\cdot$};	
			\node at (-6.5,-1.4) {$\cdot$};	
			\node at (-6.8,-1.4) {$\cdot$};	
			\node at (-7.1,-1.4) {$\cdot$};	
			\node at (-7.4,-1.4) {$\cdot$};	
			\node at (-7.7,-1.4) {$\cdot$};
			
			\coordinate [label=left:$b_{(2k+1)\ell+1}$] (b2k+1n+1) at (-8.7,0);
			\draw (b2k+1n+1) circle(.1);

			\draw[-latex,line width=0.5pt] (-5.245,0.09)--(-5.65,0.9);
			\draw[line width=0.5pt] (-5.245,0.09)--(-5.855,1.31);
			
			\draw[-latex,line width=0.5pt] (-5.245,-0.09)--(-5.65,-0.9);
			\draw[line width=0.5pt] (-5.245,-0.09)--(-5.855,-1.31);
			
			\draw[line width=0.5pt] (-8.045,1.31)--(-8.655,0.09);
			\draw[-latex,line width=0.5pt] (-8.045,1.31)--(-8.45,0.5);
			
			\draw[-latex,line width=0.5pt] (-8.045,-1.31)--(-8.45,-0.5);
			\draw[line width=0.5pt] (-8.045,-1.31)--(-8.655,-0.09);
				
		\end{tikzpicture}
		\caption{ A broken $k$-diamond of length $2\ell$.}
		\label{fig-broken-k}
	\end{center}
\end{figure}

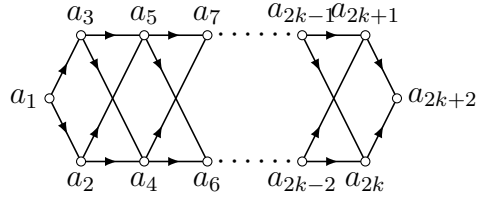
\begin{figure}[h]
	\begin{center}
		\begin{tikzpicture}[scale=0.6]	
			\draw (0,0) circle(.1);
			\node at (0,0) [above] {$a_3$};	
			\draw (1.4,0) circle(.1);
			\node at (1.4,0) [above] {$a_5$};	
			\draw (2.8,0) circle(.1);
			\node at (2.8,0) [above] {$a_7$};
			
			\node at (3.1,0) []{$\cdot$};
			\node at (3.4,0) []{$\cdot$};
			\node at (3.7,0) []{$\cdot$};
			\node at (4,0) []{$\cdot$};
			\node at (4.3,0) []{$\cdot$};
			\node at (4.6,0) []{$\cdot$};

			\draw (4.9,0) circle(.1);
			\node at (4.9,0) [above] {$a_{2k-1}$};
			\draw (6.3,0) circle(.1);
			\node at (6.3,0) [above] {$a_{2k+1}$};	
			\draw (7,-1.4) circle(.1);
			\node at (7,-1.4) [right] {$a_{2k+2}$};	
			
			\draw (-0.7,-1.4) circle(.1);
			\node at (-0.7,-1.4) [left] {$a_1$};	
			\draw (0,-2.8) circle(.1);
			\node at (0,-2.8) [below] {$a_2$};
			\draw (1.4,-2.8) circle(.1);
			\node at (1.4,-2.8) [below] {$a_4$};		
			\draw (2.8,-2.8) circle(.1);
			\node at (2.8,-2.8) [below] {$a_6$};
			
			\node at (3.1,-2.8) []{$\cdot$};
			\node at (3.4,-2.8) []{$\cdot$};
			\node at (3.7,-2.8) []{$\cdot$};
			\node at (4,-2.8) []{$\cdot$};
			\node at (4.3,-2.8) []{$\cdot$};
			\node at (4.6,-2.8) []{$\cdot$};
			
			\draw (4.9,-2.8) circle(.1);
			\node at (4.9,-2.8) [below] {$a_{2k-2}$};
			\draw (6.3,-2.8) circle(.1);
			\node at (6.3,-2.8) [below] {$a_{2k}$};
			
			\draw[-latex,line width=0.6pt](0.1,0)--(0.8,0);
			\draw[line width=0.6pt] (0.7,0)--(1.3,0);
			\draw[-latex,line width=0.6pt](1.5,0)--(2.2,0);
			\draw[line width=0.6pt] (2.1,0)--(2.7,0);
			\draw[-latex,line width=0.6pt](5,0)--(5.7,0);
			\draw[line width=0.6pt] (5.6,0)--(6.2,0);
			
			\draw[-latex,line width=0.6pt](6.345,-0.09)--(6.7,-0.8);
			\draw[line width=0.6pt] (6.6,-0.6)--(6.955,-1.31);
			
			\draw[-latex,line width=0.6pt](-0.655,-1.31)--(-0.3,-0.6);
			\draw[line width=0.6pt] (-0.4,-0.8)--(-0.045,-0.09);
			
			\draw[-latex,line width=0.6pt](-0.655,-1.49)--(-0.3,-2.2);
			\draw[line width=0.6pt] (-0.4,-2)--(-0.045,-2.71);
			
			\draw[-latex,line width=0.6pt](0.1,-2.8)--(0.8,-2.8);
			\draw[line width=0.6pt] (0.7,-2.8)--(1.3,-2.8);
			
			\draw[-latex,line width=0.6pt](1.5,-2.8)--(2.2,-2.8);
			\draw[line width=0.6pt] (2.1,-2.8)--(2.7,-2.8);
			
			\draw[-latex,line width=0.6pt](5,-2.8)--(5.7,-2.8);
			\draw[line width=0.6pt] (5.6,-2.8)--(6.2,-2.8);
			
			\draw[-latex,line width=0.6pt](6.345,-2.71)--(6.7,-2);
			\draw[line width=0.6pt] (6.6,-2.2)--(6.955,-1.49);
			
			\draw[-latex,line width=0.6pt](0.045,-0.09)--(0.4,-0.8);
			\draw[line width=0.6pt] (0.3,-0.6)--(1.355,-2.71);
			
			\draw[-latex,line width=0.6pt](1.445,-0.09)--(1.8,-0.8);
			\draw[line width=0.6pt] (1.7,-0.6)--(2.755,-2.71);
			
			\draw[-latex,line width=0.6pt](4.945,-0.09)--(5.3,-0.8);
			\draw[line width=0.6pt] (5.2,-0.6)--(6.255,-2.71);

			\draw[-latex,line width=0.6pt](0.045,-2.71)--(0.4,-2);
			\draw[line width=0.6pt] (0.3,-2.2)--(1.355,-0.09);
			
			\draw[-latex,line width=0.6pt](1.445,-2.71)--(1.8,-2);
			\draw[line width=0.6pt] (1.7,-2.2)--(2.755,-0.09);
			
			\draw[-latex,line width=0.6pt](4.945,-2.71)--(5.3,-2);
			\draw[line width=0.6pt] (5.2,-2.2)--(6.255,-0.09);
			
		\end{tikzpicture}
		
		\caption{ A $k$-elongated partition diamond of length $1$.}
		\label{fig-k-elo-1}
	\end{center}
\end{figure}
Let $\Delta_{k}(n)$ denote the number of broken $k$-diamond partitions of $n$. 
Andrews and Paule \cite{Andrews} obtained the following generating function for $\Delta_{k}(n)$,
\begin{align*}
\sum_{n=0}^{\infty} \Delta_{k}(n) q^{n}= \prod_{n=1}^{\infty} \frac{\left(1-q^{2n}\right)\left(1-q^{(2k+1)n}\right)}
{\left(1-q^n\right)^3\left(1-q^{(4k+2)n}\right)}.
\end{align*}
Note that the generating function for $\Delta_{k}(n)$ is a modular form. More precisely, $\Delta_{k}(n)$ are the coefficients of a modular function over $\Gamma_0(4k+2)$. The modular aspect led to some various arithmetic theorems and properties. For instance, many Ramanujan-like
congruences satisfied by $\Delta_{k}(n)$ has been proved by many authors, see \cite{Andrews, Chan, Hirschhorn, Hirschhorn2, Jameson, Paule, Xiong}

The Tur\'an type inequalities arise in the study of the Maclaurin coefficients of real entire functions in the Laguerre-P\'{o}lya class. We refer the interested readers to \cite{Levin} and \cite{Rahman}.

The Tur\'an type inequalities are closely related with the Jensen polynomials. The Jensen polynomial $J^{d,n}_a(X)$ of degree $d$ and shift $n$ associated to an arbitrary sequence $\{a_0, a_1, a_2, \ldots \}$ of real numbers are defined by
\begin{eqnarray*}
J^{d,n}_a(X)\colon=\sum_{j=0}^d{d\choose j}a_{n+j}X^j.
\end{eqnarray*}
More properties of the Jensen polynomials can be found in  \cite{Craven, Csordas, Csordas2}.
Note that for $d=2$ and shift $n-1$, the Jensen polynomial $J^{2,n-1}_a(X)$ is
\begin{eqnarray*}
J^{2,n-1}_a(X)=a_{n-1}+2a_{n}X+a_{n+1}X^2.
\end{eqnarray*}
$J^{2,n-1}_a(X)$ is hyperbolic if and only if the Tur\'an inequality (also called log-concavity)
\[
a_{n}^2\geq a_{n-1}a_{n+1}
\]
holds. Recall that a polynomial with real coefficients is called hyperbolic if all of its zeros are real. 

For $d=3$ and shift $n-1$, the Jensen polynomial $J^{3,n-1}_a(X)$ derives to
\begin{eqnarray*}
J^{3,n-1}_a(X)=a_{n-1}+3a_{n}X+3a_{n+1}X^2+a_{n+2}X^3.
\end{eqnarray*}
The hyperbolic of $J^{3,n-1}_a(X)$ is equivalent to the $3$-order Tur\'an inequality
\begin{equation*}
4(a_{n}^2-a_{n-1}a_{n+1})(a_{n+1}^2-a_{n}
a_{n+2})-(a_{n}a_{n+1}-a_{n-1}a_{n+2})^2\geq0.
\end{equation*}
In general, we say that the sequence $\{a_{n}\}_{n\geq 0}$ satisfies the Tur\'an inequalities of order $d$ if and only if $J^{d,n-1}_a(X)$ is hyperbolic. For the backgrounds on the Tur\'an type equalities for the Laguerre-P\'{o}lya class and the Riemann $\Xi$-function, see
\cite{Craven, Csordas, Csordas2,Dimitrov, Dimitrov2, Polya}.

Recently, 
Tur\'{a}n inequalities of order $m$ for the partition function have been proved by numerous mathematicians. Recall that a partition of a positive integer $n$ is a nonincreasing sequence $(\lambda_1,\lambda_2,\ldots, \lambda_r)$ of positive integers such that $ \lambda_1+\lambda_2+\cdots+\lambda_r=n$. Let $p(n)$ denote the number of partitions of $n$. 
  Nicolas \cite{Nicolas}, DeSalvo and Pak \cite{Desalvo} independently showed that $p(n)$ is log-concave for $n\geq 25$.  Chen \cite{Chen3} conjectured that $p(n)$ possess the Tur\'an inequality of order $3$ for $n\geq 95$, which was proved by Chen, Jia and Wang \cite{Chen-Jia-Wang-2017}. 
 Chen, Jia and Wang \cite{Chen-Jia-Wang-2017} also proposed a conjecture that for any positive integer $d\geq4$ and sufficiently large $n$, the Tur\'an inequalities of order $d$ for $p(n)$ holds. Griffin, Ono, Rolen and Zagier \cite{Zagier} verified the Jensen polynomials associated with $p(n)$ and the Riemann $\Xi$-function have only real zeros for sufficiently large $n$. Using this approach, Larson and Wagner \cite{larson} gave the thresholds for Tur\'an inequalities of order $4$ and $5$.
Griffin, Ono, Rolen, Thorner, Tripp, and Wagner \cite{Griffin} 
made this approach effective for 
the Riemann $\Xi$-function.

Chen \cite{Chen3} undertook a comprehensive study on inequalities pertaining to invariants of a binary form. In the study of the $4$-order Jensen polynomials, he considered the following three invariants of the quartic binary form which is related to the hyperbolic of the $4$-order Jensen polynomials
\begin{eqnarray}\label{invariant}
\begin{split}
A(a_0,a_1,a_2,a_3,a_4)&=a_0a_4-4a_1a_3+3a_2^2,\\[9pt]
B(a_0,a_1,a_2,a_3,a_4)&=-a_0a_2a_4+a_2^3+a_0a_3^2+a_1^2a_4-2a_1a_2a_3,\\[9pt]
I(a_0,a_1,a_2,a_3,a_4)&=A(a_0,a_1,a_2,a_3,a_4)^3-27B(a_0,a_1,a_2,a_3,a_4)^2.
\end{split}
\end{eqnarray}


In fact, $A(a_0,a_1,a_2,a_3,a_4)$ coincides with the discrete Laguerre inequalities of order $2$. The discrete Laguerre inequalities was introduced as follows \cite{wang2},
\begin{equation}\label{dislag}
L_{m}(a_n)\colon =\frac{1}{2}\sum_{k=0}^{2m}(-1)^{k+m}{{2m}\choose k}a_{n+k}a_{2m-k+n} \geq 0.
\end{equation}
Wang and Yang \cite{wang2} (\cite{wang4}, resp.) proved that the partition function, the overpartition function, the Bernoulli numbers, the derangement numbers, the Motzkin numbers, the Fine numbers, the Franel numbers and the Domb numbers (the distinct partition function, resp.)
possess the Laguerre inequality of order $2$. Wagner \cite{Wagner} proved  the partition function $p(n)$ satisfies the Laguerre inequality of
any order as $n\rightarrow \infty$ and proposed a conjecture on the thresholds of the $m$-rd Laguerre inequalities of $p(n)$ for $m\leq 10$. Dou and Wang \cite{dw} proved Wagner's conjecture for $3\leq m\leq 9$. For more work on the Laguerre inequality, refer to \cite{Cardon, Craven, Csordas5, K.Dilcher, W.H.Foster}.

On the other hand, $B(a_0,a_1,a_2,a_3,a_4)>0$ is equivalent to 
\[
\left|\begin{array}{cccc}
		a_2 & a_3 &  a_4\\
		a_1 & a_2 & a_3\\
		a_0 & a_1 & a_2\\
		\end{array} \right|>0.
\]
For $p(n)$, $B>0$ has been showed by Hou and Zhang \cite{hz}, and Jia and Wang \cite{Jia-Wang-2018} respectively. Recently, Wang and Yang \cite{wang3} proved the positivity of $\det (a_{n-i+j})_{1\leq i,j\leq 4}$ for sequence $\{a_n\}_{n\geq0}$ involving the partition function and the overpartition function and gave an iterated approach to compute $\det (a_{n-i+j})_{1\leq i,j\leq m}$ for any $m$.

Since then, Tur\'{a}n inequalities for other partition functions have been extensively investigated. For example, Craig and Pun \cite{Craig} showed that $q(n)$ satisfies the Tur\'an inequalities of order $d$ for sufficiently large $n$ and conjectured that the distinct partition function $q(n)$ is log-concave for $n\geq 33$ and satisfies the Tur\'an inequalities of order $3$ for $n\geq 121$, which were proved by Dong and Ji \cite{DJ}. Recall that a distinct partition of integer $n$ is a partition of $n$ with all distinct parts. Dong and  Ji \cite{DJ} also proposed a conjecture that for $q(n)$, $A> 0$ for $n\geq 230$, $B> 0$ for $n\geq 272$ and $I> 0$ for $n\geq 267$. Wang and Yang \cite{wang4} affirmed that for $q(n)$, $A> 0$ for $n\geq 229$, $B> 0$ for $n\geq 271$. Dong, Ji and Jia \cite{Dong} proved that $\Delta_{k}(n)$ satisfies the order $d$ Tur\'{a}n inequalities for $d\geq 1$ and for sufficiently large $n$ when $k = 1$ or $2$. Jia \cite{Jia} proved that $\{\Delta_{k}(n)\}_{n\geq6}$ satisfies $3$-order Tur\'{a}n inequalities for $k = 1$ or $2$.

Similar to the work of Wang and Yang \cite{wang4} with $q(n)$ just mentioned, we will derive that $\Delta_{k}(n)$ satisfies $A> 0$ for $n\geq 12$, $B> 0$ for $n\geq 18$ when $k = 1$ or $2$ and conjecture that $I>0$ for $n\geq 14$ in this paper. Our main tool is due to Dou and Wang \cite{dw},  Wang and Yang \cite{wang3} and Wang and Yang \cite{wang4}. The remaining of this paper is organized as follows. In Section \ref{1}, we will provide some auxiliary results required to show the main theorems of this paper. In Section \ref{2}, using the spirit in \cite{dw}, we will prove some inequalities involving the $2$-nd modified Bessel function of the first kind, which are necessary for proofing our main Theorems. In Section \ref{3}, we will prove $\Delta_{k}(n)$ satisfies the Laguerre inequality of order $2$ for $n\geq 12$, i.e. $A>0$. In Section \ref{4}, using a similar approach as in Section \ref{3}, we will show the positivity of $\det (\Delta_{k}(n-2+i+j))_{1\leq i,j\leq 3}$ for $n\geq 18$, i.e. $B>0$. We conclude in Section \ref{5} with some problems for further work.

\section{Preparation}\label{1}
In this Section, we will provide some auxiliary results essential to prove the main theorems of this paper. The proofs involve some sharper bounds for $\Delta_{k}(n)$ than those in \cite{Dong}. Recall that Dong, Ji and Jia \cite{Dong} gave the following bounds for $\Delta_{k}(n)$.

\begin{theo}[Dong, Ji and Jia \cite{Dong}, Theorem 3.1]\label{Dong, Ji and Jia}
	
  For $k = 1$ or $2$, let $x_{k}(n)$ be
	\begin{eqnarray}
	x_{k}(n)=\frac{\pi\sqrt{24n-(2k+2)}}{6},
	\end{eqnarray}
and 
\begin{eqnarray}
	\alpha_{k}=\frac{5k+2}{2k+1}.
	\end{eqnarray}
Then for $k = 1$ or $2$ and for $x_{k}(n)\geq 152$, or equivalently, for $n\geq 3512$, we have
\begin{eqnarray}\label{Ineq q(n)}
	M_{k}(n)\left(1-\frac{1}{x_{k}(n)^{6}}\right)\leq \Delta_{k}(n)\leq M_{k}(n)\left(1+\frac{1}{x_{k}(n)^{6}}\right),
	\end{eqnarray}
where
\begin{equation}
M_{k}(n)=\frac{\alpha_{k}\pi^3}{18x_{k}(n)^{2}}I_{2}\left(\sqrt{\alpha_{k}}x_{k}(n)\right),
    \end{equation}
and $I_2$ is the $2$-nd modified Bessel function of the first kind defined as   
   \begin{eqnarray}\label{I_2(s)}
			I_2(s)=\frac{s^2}{3\pi}\int_{-1}^{1}{(1-t^{2})}^{\frac{3}{2}}e^{st}dt.
	\end{eqnarray}

\end{theo}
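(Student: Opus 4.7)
The plan is to apply the Hardy--Ramanujan--Rademacher circle method to the generating function, after rewriting it as an eta-quotient. Using $\eta(\tau)=q^{1/24}\prod_{n\geq 1}(1-q^{n})$ with $q=e^{2\pi i\tau}$, a direct manipulation gives
\[
G_{k}(q):=\sum_{n\geq 0}\Delta_{k}(n)\,q^{n} \;=\; q^{(k+1)/12}\,\frac{\eta(2\tau)\,\eta((2k+1)\tau)}{\eta(\tau)^{3}\,\eta((4k+2)\tau)},
\]
so that $G_{k}$ is a weakly holomorphic modular function of weight $-1$ on $\Gamma_{0}(4k+2)$, with a unique dominant singularity at the cusp $q=1$.

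Next, I would invert by Cauchy's formula,
\[
\Delta_{k}(n)=\frac{1}{2\pi i}\oint_{|q|=e^{-2\pi/N^{2}}}G_{k}(q)\,q^{-n-1}\,dq
\]
with $N$ of order $\sqrt{n}$, and decompose the contour into Farey arcs around each reduced fraction $h/c$ with $c\leq N$, following Rademacher. On each arc, the singularity of $G_{k}(q)$ at $h/c$ is transported to the cusp at $i\infty$ by applying the eta transformation $\eta(-1/\tau)=\sqrt{-i\tau}\,\eta(\tau)$ to each eta-factor, producing a tractable local model of $G_{k}$.

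The heart of the argument is the dominant arc $c=1$. Setting $\tau=it$ as $t\to 0^{+}$, a brief calculation shows that the combined $1/t$-exponent in the eta-quotient equals
\[
-\frac{\pi}{12t}\left(\frac{1}{2}+\frac{1}{2k+1}-3-\frac{1}{4k+2}\right) \;=\; \frac{\pi\,\alpha_{k}}{12\,t},
\]
giving the growth rate $\exp(\pi\alpha_{k}/(12t))$ together with a $t^{-1}$ prefactor produced by the square roots. The resulting saddle-point integral evaluates, via the integral representation of $I_{2}$ stated in the theorem, to a constant multiple of $I_{2}(\sqrt{\alpha_{k}}\,x_{k}(n))/x_{k}(n)^{2}$; collecting all constants produces exactly $M_{k}(n)$.

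Finally, I would estimate the contributions of all subdominant arcs $c\geq 2$ using the corresponding eta transformation at each cusp together with Kloosterman-sum bounds; these are exponentially smaller than $M_{k}(n)$. Adding the error from truncating the Poincar\'e-type asymptotic expansion of the Bessel contribution on the main arc yields $|\Delta_{k}(n)-M_{k}(n)|\leq M_{k}(n)/x_{k}(n)^{6}$ for $x_{k}(n)\geq 152$. I expect the main obstacle to be not the structural argument, which is standard since Rademacher, but the \emph{effective} bookkeeping needed to obtain the clean relative-error constant $1$ and the explicit threshold: this requires tracking every implicit constant through the Farey-arc and Kloosterman estimates, in the spirit of Lehmer's effective refinement of Rademacher's formula for $p(n)$.
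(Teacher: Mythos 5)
First, a point of comparison: the paper does not prove this statement at all. It is imported verbatim from Dong, Ji and Jia \cite{Dong} (their Theorem 3.1), and the present paper only cites it and later tweaks one inequality in its proof to get Theorem \ref{generalize}. So there is no internal proof to measure you against; your proposal has to be judged as a reconstruction of the proof in the cited reference. In outline it is the right reconstruction: the eta-quotient you wrote is correct (the $q$-exponent $(k+1)/12$ and the growth exponent $\pi\alpha_k/(12t)$ both check out), and the bounds are indeed obtained by a Rademacher-type analysis at the dominant cusp with explicit control of the remaining arcs.

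However, your sketch has a concrete internal inconsistency, and it sits exactly at the step that produces $I_2$. The square roots from $\eta(imt)=(mt)^{-1/2}\eta\left(i/(mt)\right)$ give a prefactor
\[
\frac{(2t)^{-1/2}\left((2k+1)t\right)^{-1/2}}{t^{-3/2}\left((4k+2)t\right)^{-1/2}}=t,
\]
that is $t^{+1}$, not $t^{-1}$ (equivalently, for weight $w=-1$ the cusp expansion carries $t^{-w}=t$). This is not cosmetic: the power of $t$ is precisely what fixes the index of the Bessel function in the saddle-point evaluation. One has
\[
\frac{1}{2\pi i}\int_{c-i\infty}^{c+i\infty} z\,e^{A/z+Bz}\,dz=\frac{A}{B}\,I_{2}\left(2\sqrt{AB}\right),
\]
and with $A=\pi\alpha_k/12$ and $B=2\pi\left(n-(k+1)/12\right)$ this yields exactly $M_k(n)=\frac{\alpha_k\pi^3}{18x_k(n)^2}I_2\left(\sqrt{\alpha_k}\,x_k(n)\right)$; a $z^{-1}$ prefactor would instead produce $I_0$, contradicting the statement you are trying to prove. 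Beyond this, everything that makes the theorem nontrivial — the explicit relative error $x_k(n)^{-6}$ and the explicit threshold $x_k(n)\geq 152$ — is deferred to ``effective bookkeeping.'' For a statement of this kind that bookkeeping is the theorem: one must bound the subdominant cusps and Kloosterman-type sums with explicit constants, truncate the asymptotic expansion of $I_2$ with an explicit remainder, and verify that the total error is at most $M_k(n)x_k(n)^{-6}$ in the stated range. So you have identified the correct route, but the sketch neither sets up the main-arc computation consistently nor engages with the quantitative content that the inequality actually asserts.
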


We discover that Theorem \ref{Dong, Ji and Jia} is precise to prove our Theorem \ref{theorem1} but not precise enough to prove our Theorem \ref{3jie-theorem}. Thus, to achieve our goals,  we will make a slight adjustment to the proof of Theorem \ref{Dong, Ji and Jia} and make it effective for the following theroem.
\begin{theo}\label{generalize}
For $k = 1$ or $2$ and for $x_{k}(n)\geq 152$, or equivalently, for $n\geq 3512$, we have
\begin{eqnarray}\label{Generalize}
	M_{k}(n)\left(1-\frac{1}{x_{k}(n)^{10}}\right)\leq \Delta_{k}(n)\leq M_{k}(n)\left(1+\frac{1}{x_{k}(n)^{10}}\right).
	\end{eqnarray}
\end{theo}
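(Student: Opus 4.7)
The plan is to revisit the proof of Theorem~\ref{Dong, Ji and Jia} given by Dong, Ji and Jia and carry out a targeted quantitative strengthening of its error estimates, so that every instance of $x_k(n)^{-6}$ in their argument becomes $x_k(n)^{-10}$. As the author already announces, no new analytic idea is needed; the improvement is effectively a matter of retaining two additional orders in the asymptotic expansion and redoing the numerical bounds with a tighter tolerance. Since the threshold $x_k(n)\ge 152$ is to be retained, the regime of validity need not be changed, and the proof should go through by direct transcription of the earlier one with sharpened constants.

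The first step is to isolate the two places where the $x_k(n)^{-6}$ error enters in Theorem~\ref{Dong, Ji and Jia}: the tail of the exact (Rademacher- or Wright-type) series representing $\Delta_k(n)$, and the truncation of the local asymptotic expansion near the dominant singularity that produces the main term $M_k(n)$ from an integral of the shape~\eqref{I_2(s)}. The tail contribution is already geometrically smaller than any fixed negative power of $x_k(n)$, so bounding it by $M_k(n)\cdot x_k(n)^{-10}$ for $x_k(n)\ge 152$ is just a recomputation of a numerical constant in Dong-Ji-Jia's estimate. The local expansion is the delicate part: I would retain two additional terms in the Taylor/Laplace expansion used to derive $M_k(n)$, express each such term via modified Bessel functions of the first kind $I_\nu$ of small integer order through integration-by-parts identities applied to the defining integral~\eqref{I_2(s)}, and use standard ratios $I_\nu(s)/I_2(s)$ in the regime $s\ge 152\sqrt{\alpha_k}$ to show that each piece is either absorbed into $M_k(n)$ itself or is of size $O(M_k(n)\cdot x_k(n)^{-10})$. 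After this refinement, the last step is numerical: verify that, for all $x_k(n)\ge 152$, the cumulative error is indeed at most $M_k(n)\cdot x_k(n)^{-10}$. Because every constant is explicit and the threshold is large, this reduces to a finite list of inequalities in elementary functions of $x_k(n)$, all monotone in the relevant regime.

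The main obstacle will be the bookkeeping. The proof of Theorem~\ref{Dong, Ji and Jia} is already technical, and to push the error from $x_k(n)^{-6}$ to $x_k(n)^{-10}$ one must track four additional negative powers of $x_k(n)$ carefully, making sure that no coefficient---whether from the local Laplace expansion, from estimates on the Bessel ratios $I_\nu(s)/I_2(s)$, or from the treatment of the tail---is left large enough to violate the sharper tolerance at $x_k(n)=152$. Nothing conceptually new arises, but the precision demanded is strictly finer than in the original argument, and it is in the explicit computation of the extended expansion and the subsequent numerical verification at the threshold that the real work lies.
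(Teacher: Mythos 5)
Your overall strategy---rerun the Dong--Ji--Jia argument with a tighter tolerance while keeping the threshold $x_k(n)\ge 152$---is indeed what the paper does, but your diagnosis of where the $x_k(n)^{-6}$ error comes from is wrong in a way that matters. You split the error into an exponentially small tail plus a power-like error arising from ``the truncation of the local asymptotic expansion near the dominant singularity that produces the main term $M_k(n)$,'' and you place the real work in the second piece. No such piece exists. The main term $M_{k}(n)=\frac{\alpha_{k}\pi^3}{18x_{k}(n)^{2}}I_{2}\left(\sqrt{\alpha_{k}}x_{k}(n)\right)$ contains the \emph{full} Bessel function $I_2$; it is not obtained by truncating a Laplace expansion, and in the proof of Theorem~\ref{Dong, Ji and Jia} the entire discrepancy $\left\lvert\Delta_k(n)-M_k(n)\right\rvert$ is bounded by $M_k(n)$ times an exponentially small quantity, namely $72\exp\left(-\tfrac{\sqrt{21}}{6}x_k(n)\right)$. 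Consequently the paper's whole proof of Theorem~\ref{generalize} consists of replacing the single inequality $72\exp\left(-\tfrac{\sqrt{21}}{6}x_k(n)\right)\le \tfrac{1}{2}x_k(n)^{-6}$ by $72\exp\left(-\tfrac{\sqrt{21}}{6}x_k(n)\right)\le \tfrac{1}{2}x_k(n)^{-10}$ for $x_k(n)\ge 152$, which holds with enormous room to spare: at $x_k(n)=152$ the left-hand side is of order $10^{-49}$ while the right-hand side is of order $10^{-22}$. In other words, the part of your plan that you call ``just a recomputation of a numerical constant'' is the whole proof, and the part you call ``the delicate part'' is a phantom.

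This is not merely wasted effort: if the error structure were as you describe, your plan could not succeed at all. A genuine power-like term of order, say, $x_k(n)^{-7}$ in $\Delta_k(n)/M_k(n)-1$ would make \eqref{Generalize} false for all large $n$, since the theorem keeps the same main term $M_k(n)$ and the same threshold; no amount of bookkeeping could repair that, and your proposed remedy of letting extra expansion terms be ``absorbed into $M_k(n)$ itself'' is not available, because $M_k(n)$ is a fixed quantity appearing in the statement being proved. Theorem~\ref{generalize} is provable precisely because the error in Theorem~\ref{Dong, Ji and Jia} is already exponentially small---smaller than any fixed power of $x_k(n)$ in the stated range---which is the one observation your proposal needed to make and only half makes. (The power-like truncation errors you have in mind do occur in this paper, but in Lemma~\ref{lemm 1} and Lemma~\ref{B_I}, which expand $I_2(s)$ itself; those feed into Theorems~\ref{theorem1} and~\ref{3jie-theorem}, not into Theorem~\ref{generalize}.)
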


The majority of its proof is similar to the proof of Theorem \ref{Dong, Ji and Jia}, the only essential modification is that we convert the inequality \cite[(3.8)]{DJ} \[72exp\left(-\frac{\sqrt{21}}{6}x_k(n)\right)\leq\frac{1}{2{x_k}^6(n)} \quad \text{for} \quad x_{k}(n)\geq 152\] to \[72exp\left(-\frac{\sqrt{21}}{6}x_k(n)\right)\leq\frac{1}{2{x_k}^{10}(n)} \quad \text{for} \quad x_{k}(n)\geq 152.\] Thus, we omit the details.

In order to make the asymptotic formula in Theorem \ref{Dong, Ji and Jia} (Theorem \ref{generalize}, resp.) useful in the proof of
Theorem \ref{theorem1} (Theorem \ref{3jie-theorem}, resp.), we need inequality \eqref{Ineq I_2(s)} (\eqref{ineq I_2(s)}, resp.) on the $2$-nd modified Bessel function $I_2(s)$
of the first kind as follows,
\begin{lemm}\label{lemm 1}
Let
  \begin{eqnarray}
	B_I(s)\colon =1-\frac{15}{8s}+\frac{105}{128s^2}+\frac{315}{1024s^3}+\frac{10395}{32768s^4}+\frac{135135}{262144s^5},
	\end{eqnarray}
then for $s\geq 28$, we have
\begin{eqnarray}\label{Ineq I_2(s)}
	\frac{e^{s}}{\sqrt{2\pi s}}\left(B_I(s)-\frac{27}{s^6}\right)\leq I_2(s)\leq \frac{e^{s}}{\sqrt{2\pi s}}\left(B_I(s)+\frac{27}{s^6}\right).
	\end{eqnarray}
\end{lemm}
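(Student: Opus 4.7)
The plan is to carry out an explicit Laplace-type analysis of the integral representation \eqref{I_2(s)}. The substitution $t = 1 - u/s$ brings it into the form
\[
I_2(s) = \frac{2\sqrt{2}\,e^s}{3\pi\sqrt{s}}\int_0^{2s} u^{3/2}\left(1-\frac{u}{2s}\right)^{3/2} e^{-u}\,du,
\]
after which I would expand $(1-u/(2s))^{3/2}$ in its binomial series through order five and extend each of the resulting six integrals from $[0,2s]$ to $[0,\infty)$ via $\int_0^\infty u^{3/2+k}e^{-u}\,du = \Gamma(5/2+k)$. Using $\tfrac{2\sqrt{2}\sqrt{2\pi s}}{3\pi\sqrt{s}} = \tfrac{4}{3\sqrt{\pi}}$, a routine computation verifies that the $k$-th term so obtained equals exactly the $k$-th coefficient of $B_I(s)$ for $k=0,1,\ldots,5$; this is the source of the polynomial $B_I$ in the statement.

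Everything not captured by these main terms must fit inside the $\pm 27/s^6$ envelope, and there are only two residual pieces: (i) the Taylor-remainder contribution $\int_0^{2s} u^{3/2}R_6(u/(2s)) e^{-u}\,du$, where $R_6(y):=(1-y)^{3/2} - \sum_{k=0}^{5}\binom{3/2}{k}(-y)^k$; and (ii) the six tail corrections $\int_{2s}^\infty u^{3/2+k} e^{-u}\,du$ introduced when the main integrals are extended to $\infty$. For (i), the pivotal observation is that $\binom{3/2}{k}(-1)^k>0$ for every $k\geq 2$, so $R_6(y)=\sum_{k\geq 6}|\binom{3/2}{k}|\,y^k\geq 0$ on $[0,1]$. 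Evaluating the full binomial series at $y=1$ gives $\sum_{k\geq 2}|\binom{3/2}{k}|=1/2$, and subtracting the four explicit terms for $k=2,3,4,5$ (whose absolute values sum to $121/256$) yields the clean uniform bound
\[
0 \leq R_6(y) \leq \frac{7}{256}\,y^6,\qquad 0\leq y\leq 1.
\]
Substituting $y=u/(2s)$ and using $\int_0^\infty u^{15/2}e^{-u}\,du = \Gamma(17/2)$ bounds piece (i) by $\tfrac{7\,\Gamma(17/2)}{256\cdot 64\, s^6}$; after incorporating the prefactor and renormalizing by $\sqrt{2\pi s}\,e^{-s}$, this becomes an explicit fraction of $1/s^6$ well under $5/s^6$. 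For (ii), each tail is dominated by $(2s)^{3/2+k}e^{-2s}$ times an explicit constant, and for $s\geq 28$ the overall factor $e^{-2s}\leq e^{-56}$ renders the full tail contribution negligible against $1/s^6$.

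The main obstacle is the arithmetic bookkeeping: verifying the six coefficient identifications that recover $B_I(s)$ and checking that the Taylor-remainder estimate together with the exponentially small tails stays comfortably inside the envelope $27/s^6$ once $s\geq 28$. The value $27$ is chosen loose enough to absorb both the universal constants arising from the Taylor bound and the crude estimates of the tail integrals, so no sharp optimization is needed and the bound \eqref{Ineq I_2(s)} follows by collecting the two contributions with their correct signs.
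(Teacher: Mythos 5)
Your proposal is correct, and while it follows the same broad strategy as the paper (a Watson's-lemma-type expansion of the Bessel integral with explicit error control), the technical decomposition is genuinely different and in fact cleaner. The paper splits the integral at $t=0$, bounds the $[-1,0]$ piece crudely by $s^2/(3\pi)$, substitutes $u=1-t$ on $[0,1]$, expands $(2-u)^{3/2}$ with a \emph{Lagrange} remainder $c(\xi)u^6$, $c(\xi)\le 7/1024$, and extends the six main integrals to infinity, paying incomplete-Gamma corrections $\Gamma(a,s)$ that are controlled by Pinelis's bound $\Gamma(a,s)\le as^{a-1}e^{-s}$; the three error sources are then packaged into a function $g(s)$, shown to be decreasing, and evaluated numerically at $s=28$ to get $g(28)\approx 26.09<27$ --- so the paper's constant $27$ is nearly tight for its method. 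Your substitution $t=1-u/s$ absorbs the whole range $[-1,1]$ into one integral over $[0,2s]$, eliminating the paper's separate $[-1,0]$ error term; your remainder bound $0\le R_6(y)\le \frac{7}{256}y^6$ on all of $[0,1]$ comes from the sign pattern $\binom{3/2}{k}(-1)^k=\bigl|\binom{3/2}{k}\bigr|>0$ for $k\ge 2$ (a subtraction-free series argument rather than a derivative bound), and your truncation tails sit at $2s$ rather than $s$, so they are of size $O(\mathrm{poly}(s)\,e^{-2s})$ absolutely and genuinely negligible. I checked your arithmetic: the six coefficient identifications do reproduce $B_I(s)$, the sums $\sum_{k\ge 2}\bigl|\binom{3/2}{k}\bigr|=\tfrac12$ and $\tfrac38+\tfrac1{16}+\tfrac3{128}+\tfrac3{256}=\tfrac{121}{256}$ are right, and your remainder contribution works out to $\frac{4\cdot 7\cdot\Gamma(17/2)}{3\sqrt{\pi}\cdot 256\cdot 64\cdot s^{6}}\approx \frac{4.51}{s^{6}}$, comfortably inside the envelope. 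The net effect is that your argument proves the lemma with an error constant near $5$ instead of $27$, needs no monotonicity argument and no essential use of Pinelis's inequality, and is valid well below $s=28$; the paper's route, by contrast, is what forces the specific constant $27$ and the threshold structure of the lemma as stated.
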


For convenience, let's denote the lower (upper, resp.) bound for $I_2(s)$ as $I_{21}(s)$ ($I_{22}(s)$, resp.),
\begin{eqnarray}\label{12}
			I_{21}(s)\colon =\frac{e^{s}}{\sqrt{2\pi s}}\left(B_I(s)-\frac{27}{s^6}\right),\quad
            I_{22}(s)\colon =\frac{e^{s}}{\sqrt{2\pi s}}\left(B_I(s)+\frac{27}{s^6}\right).
	\end{eqnarray}

We find that Jia \cite[Theorem 2.1]{Jia} already derived that $\left\lvert I_2(s)-B_I(s) \right\rvert \leq \frac{73}{s^6}$ for $s\geq \frac{\left(\frac{15}{2}\right)^6}{120}\approx1483.2$, but we do not apply this bounds because the needed range of $s$ is too large to facilitate computer verification of our Theorem \ref{theorem1}.

\begin{lemm}\label{B_I}
Let
  \begin{eqnarray*}
  \begin{split}
\bar{B}_I(s)\colon &=1-\frac{15}{8s}+\frac{105}{128s^{2}}+\frac{315}{1024s^{3}}+\frac{10395}{32768s^{4}}\\[9pt]
&+\frac{135135}{262144s^{5}}+\frac{4729725}{4194304s^{6}}+\frac{103378275}{33554432s^{7}}+\frac{21606059475}{2147483648s^{8}}\\[9pt]
&+\frac{655383804075}{17179869184s^{9}}+\frac{45221482481175}{274877906944s^{10}}+\frac{1747193641318125}{2199023255552s^{11}}.
	 \end{split}
\end{eqnarray*}
Then for $s\geq 50$, 
\begin{eqnarray}\label{ineq I_2(s)}
	\frac{e^{s}}{\sqrt{2\pi s}}\left(\bar{B}_I(s)-\frac{6148836}{s^{12}}\right)\leq I_2(s)\leq \frac{e^{s}}{\sqrt{2\pi s}}\left(\bar{B}_I(s)+\frac{6148836}{s^{12}}\right).
	\end{eqnarray}
\end{lemm}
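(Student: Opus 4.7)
The plan is to extend the argument used for Lemma \ref{lemm 1} by carrying the Bessel asymptotic to order $s^{-11}$. Starting from the integral \eqref{I_2(s)} defining $I_{2}(s)$, make the substitution $t = 1 - u/s$, which gives
\[
\frac{\sqrt{2\pi s}}{e^{s}}\,I_{2}(s) = \frac{4}{3\sqrt{\pi}}\int_{0}^{2s}u^{3/2}\left(1 - \frac{u}{2s}\right)^{3/2}e^{-u}\,du,
\]
and then Taylor-expand $(1-u/(2s))^{3/2}$ via the binomial series to order $11$:
\[
(1-x)^{3/2} = \sum_{k=0}^{11}\binom{3/2}{k}(-x)^{k} + R_{12}(x).
\]
Substituting, extending each truncated integral from $[0,2s]$ to $[0,\infty)$, and using $\int_{0}^{\infty}u^{3/2+k}e^{-u}\,du = \Gamma(5/2+k)$, the first twelve terms combine (via $\Gamma(5/2+k) = (2k+3)!!\sqrt{\pi}/2^{k+2}$) to reproduce exactly the coefficients $1,-15/8,105/128,\ldots,1747193641318125/2199023255552$ appearing in $\bar{B}_{I}(s)$. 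This match is a direct if lengthy algebraic check.

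Two error terms then need to be bounded. The Taylor-remainder contribution
$E_{1}(s) := \frac{4}{3\sqrt{\pi}}\int_{0}^{2s}u^{3/2}R_{12}(u/(2s))e^{-u}\,du$
is handled by a sign analysis: one verifies that $\binom{3/2}{k}(-1)^{k} = |\binom{3/2}{k}|$ for all $k\geq 3$, hence $R_{12}(x) = \sum_{k\geq 12}|\binom{3/2}{k}|x^{k}$ is nonnegative on $[0,1]$ and bounded by $x^{12}\sum_{k\geq 12}|\binom{3/2}{k}| \leq x^{12}/8$, the tail bound $1/8$ following from the identity $\sum_{k\geq 0}\binom{3/2}{k}(-1)^{k} = (1-1)^{3/2} = 0$. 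Integrating then gives $|E_{1}(s)|\leq C/s^{12}$ for an explicit constant $C$ involving $\Gamma(29/2)$. The second error, from replacing each $\int_{0}^{2s}$ by $\int_{0}^{\infty}$ on the polynomial part, equals $\frac{4}{3\sqrt{\pi}}\sum_{k=0}^{11}\binom{3/2}{k}(-1)^{k}(2s)^{-k}\int_{2s}^{\infty}u^{3/2+k}e^{-u}\,du$, which is $O(\mathrm{poly}(s)\,e^{-2s})$ and entirely negligible against $1/s^{12}$ for $s\geq 50$.

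The main obstacle is purely computational: (i) verifying that $\frac{4}{3\sqrt{\pi}}\binom{3/2}{k}(-1)^{k}\Gamma(5/2+k)/2^{k}$ reproduces each of the twelve listed coefficients of $\bar{B}_{I}(s)$ for $k = 0,1,\ldots,11$, and (ii) checking that the combined bound $|E_{1}(s)|+|E_{2}(s)|$ with the explicit constants derived above is indeed dominated by $6148836/s^{12}$ on $s\geq 50$. No fundamentally new analytic ingredient beyond those already used in the proof of Lemma \ref{lemm 1} is required; only the bookkeeping is heavier because more terms of the expansion appear and the effective range of $s$ is slightly larger.
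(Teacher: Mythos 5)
Your proposal is correct, and it is not simply the paper's argument transcribed: the paper prints no proof of this lemma at all, stating only that it follows the proof of Lemma \ref{lemm 1}, whose scheme is to split the integral at $t=0$ and bound the $[-1,0]$ piece crudely by $s^2/(3\pi)$, substitute $u=1-t$, expand $(2-u)^{3/2}$ by a finite Taylor formula with Lagrange remainder $c(\xi)u^{N}$, bound the tail integrals $\int_1^{\infty}$ via Pinelis's estimate \eqref{G,s}, and finally absorb all errors into a function $g(s)$ shown to be decreasing and evaluated at the threshold to produce the constant. Your route differs at each of these points: the scaled substitution $t=1-u/s$ maps all of $[-1,1]$ onto $[0,2s]$, so no separate negative-part bound is needed; the Lagrange remainder is replaced by the binomial-series tail $R_{12}$, controlled by the sign observation $\binom{3/2}{k}(-1)^k=\left|\binom{3/2}{k}\right|$ for $k\geq 3$ (correct: exactly $k-2$ factors of $\prod_{j=0}^{k-1}(3/2-j)$ are negative) together with $\sum_{k\geq 0}\binom{3/2}{k}(-1)^k=0$ (legitimate, since for exponent $3/2>0$ the series converges absolutely, with value $0$ by Abel's theorem), yielding $0\leq R_{12}(x)\leq x^{12}/8$ on $[0,1]$; and the error from extending $[0,2s]$ to $[0,\infty)$ is exponentially small, of order $s^{3/2}e^{-2s}$, rather than the polynomially growing tail terms the paper later reabsorbs through the monotonicity of $g(s)$. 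The numerical hinges check out: $\frac{4}{3\sqrt{\pi}}\left|\binom{3/2}{k}\right|\Gamma(5/2+k)\,2^{-k}$ does reproduce the listed coefficients of $\bar{B}_I(s)$ for $k=0,\ldots,11$, and your remainder constant is $\Gamma(29/2)/(24576\sqrt{\pi})=213458046676875/402653184\approx 5.3\times 10^{5}$, far below $6148836$, with the truncation error utterly negligible for $s\geq 50$. So your argument is cleaner in three respects — no auxiliary point $\xi$, no monotonicity argument for an error function, and a much smaller, essentially one-sided error (since $R_{12}\geq 0$) — and in fact proves a sharper inequality than the one stated; what the paper's scheme buys in exchange is only uniformity with the proof it actually prints for Lemma \ref{lemm 1}.
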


For convenience, 
let's denote the lower (upper, resp.) bound for $I_2(s)$ as $I_{23}(s)$ ($I_{24}(s)$, resp.),
\begin{eqnarray}\label{12}
  \begin{split}
			I_{23}(s)\colon &=\frac{e^{s}}{\sqrt{2\pi s}}\left(\bar{B}_I(s)-\frac{6148836}{s^{12}}\right),\\[9pt]
            I_{24}(s)\colon &=\frac{e^{s}}{\sqrt{2\pi s}}\left(\bar{B}_I(s)+\frac{6148836}{s^{12}}\right).
   \end{split}
	\end{eqnarray}

 Inspired by the method to prove the explicit bounds for the first modified Bessel function $I_1(s)$ used by Dong and Ji \cite[Section 2]{DJ}, we can similarly obtain the explicit bounds for the $2$-nd modified Bessel function $I_2(s)$ as shown in Lemma \ref{lemm 1} and Lemma \ref{B_I}. Due to the similarities between the two lemmas' methods of proving, we will only offer proof for Lemma \ref{lemm 1} in next Section.
\section{A proof for Lemma \ref{lemm 1}}\label{2}
Before doing this, let us first recall the definitions of the Gamma function $\Gamma(a)$ and the upper incomplete Gamma function $\Gamma(a,s)$, see \cite[Chapter6]{Abramowitz}

The Gamma function $\Gamma(a)$ is defined by
\begin{eqnarray}\label{Gamma}
			\Gamma(a)=\int_{0}^{\infty}t^{a-1}e^{-t}dt.
	\end{eqnarray}

The upper incomplete Gamma function $\Gamma(a,s)$ is defined by
\begin{eqnarray}\label{Gammas}
			\Gamma(a,s)=\int_{s}^{\infty}t^{a-1}e^{-t}dt.
	\end{eqnarray}
The following estimate on $\Gamma(a,s)$ can be derived from the proof of Proposition 2.6 of Pinelis \cite{Pinelis} which is required for proving Lemma \ref{lemm 1}. For $a\geq1$ and $s\geq a$,
\begin{eqnarray}\label{G,s}
			\Gamma(a,s)\leq as^{a-1}e^{-s}.
	\end{eqnarray}
Next we will prove the Lemma \ref{lemm 1}.

\begin{proof}
We start with the integral definition \eqref{I_2(s)} of $I_2(s)$ ,
\begin{eqnarray}\label{I_2}
			I_2(s)=\frac{s^2}{3\pi}\int_{0}^{1}{(1-t^{2})}^{\frac{3}{2}}e^{st}dt+\frac{s^2}{3\pi}\int_{-1}^{0}{(1-t^{2})}^{\frac{3}{2}}e^{st}dt.
	\end{eqnarray}
It is clear that
\begin{eqnarray}\label{I_20}
			\left\lvert\frac{s^2}{3\pi}\int_{-1}^{0}{(1-t^{2})}^{\frac{3}{2}}e^{st}dt\right\rvert\leq\frac{s^2}{3\pi}.
	\end{eqnarray}
We next estimate the first integral in \eqref{I_2}. Setting $u=1-t$, we get that
\begin{eqnarray}\label{I_21}
			\frac{s^2}{3\pi}\int_{0}^{1}{(1-t^{2})}^{\frac{3}{2}}e^{st}dt=\frac{s^2e^s}{3\pi}\int_{0}^{1}{(2-u)}^{\frac{3}{2}}{u}^{\frac{3}{2}}e^{-su}du.
	\end{eqnarray}
By Taylor’s Theorem, we have
\begin{eqnarray}\label{I_22}
			{(2-u)}^{\frac{3}{2}}=2\sqrt2-\frac{3u}{\sqrt2}+\frac{3u^2}{8\sqrt2}+\frac{u^3}{32\sqrt2}+\frac{3u^4}{512\sqrt2}+\frac{3u^5}{2048\sqrt2}+c(\xi)u^6.
	\end{eqnarray}
where for some $\xi\in[0,1]$,
\begin{eqnarray}\label{I_23}
			c(\xi)=\frac{1}{6!}\left(\frac{d^6}{du^6}(2-u)^{\frac{3}{2}}\right)_{u=\xi}=\frac{7}{1024}(2-\xi)^{-\frac{9}{2}}.
	\end{eqnarray}
Substituting \eqref{I_22} into \eqref{I_21}, we obtain
\begin{small}
\begin{align}\label{I_24}		\frac{s^2e^s}{3\pi}&\int_{0}^{1}\left(2\sqrt2{u}^{\frac{3}{2}}-\frac{3{u}^{\frac{5}{2}}}{\sqrt2}+\frac{3{u}^{\frac{7}{2}}}{8\sqrt2}+\frac{{u}^{\frac{9}{2}}}{32\sqrt2}
+\frac{3{u}^{\frac{11}{2}}}{512\sqrt2}+\frac{3{u}^{\frac{13}{2}}}{2048\sqrt2}+c(\xi){u}^{\frac{15}{2}}\right)e^{-su}du\nonumber \\[9pt]
&=\frac{\sqrt2s^2e^s}{3\pi}\left(\int_{0}^{\infty}-\int_{1}^{\infty}\right)\left(2{u}^{\frac{3}{2}}-\frac{3{u}^{\frac{5}{2}}}{2}+\frac{3{u}^{\frac{7}{2}}}{16}+\frac{{u}^{\frac{9}{2}}}{64}
+\frac{3{u}^{\frac{11}{2}}}{1024}+\frac{3{u}^{\frac{13}{2}}}{4096}\right)e^{-su}du\notag \\[9pt]
&~~+\frac{s^2e^s}{3\pi}\int_{0}^{1}c(\xi){u}^{\frac{15}{2}}e^{-su}du:=I_2^{(1)}(s)+I_2^{(2)}(s)+I_2^{(3)}(s).
\end{align}
\end{small}
Evaluating the first integral $I_2^{(1)}(s)$ in \eqref{I_24} yields the main term,
\begin{small}
\begin{align}\label{I_25}			I&_2^{(1)}(s)=\frac{\sqrt2s^2e^s}{3\pi}\int_{0}^{\infty}\left(2{u}^{\frac{3}{2}}-\frac{3{u}^{\frac{5}{2}}}{2}+\frac{3{u}^{\frac{7}{2}}}{16}+\frac{{u}^{\frac{9}{2}}}{64}
+\frac{3{u}^{\frac{11}{2}}}{1024}+\frac{3{u}^{\frac{13}{2}}}{4096}\right)e^{-su}du\notag \\[9pt]
=&\frac{\sqrt2e^s}{3\sqrt s\pi}\int_{0}^{\infty}\left(2{(su)}^{\frac{3}{2}}-\frac{3{(su)}^{\frac{5}{2}}}{2s}+\frac{3{(su)}^{\frac{7}{2}}}{16s^2}+\frac{{(su)}^{\frac{9}{2}}}{64s^3}
+\frac{3{(su)}^{\frac{11}{2}}}{1024s^4}+\frac{3{(su)}^{\frac{13}{2}}}{4096s^5}\right)e^{-su}d(su)\notag \\[9pt]
&\overset{\eqref{Gamma}}{=}\frac{\sqrt2e^s}{3\sqrt s\pi}\left(2\Gamma\left(\frac{5}{2}\right)-\frac{3}{2s}\Gamma\left(\frac{7}{2}\right)+\frac{3}{16s^2}\Gamma\left(\frac{9}{2}\right)+\frac{1}{64s^3}\Gamma\left(\frac{11}{2}\right)
\notag\right.\\ &~~~~~~~~~~~~~~~~~~ \left.
+\frac{3}{1024s^4}\Gamma\left(\frac{13}{2}\right)+\frac{3}{4096s^5}\Gamma\left(\frac{15}{2}\right)\right)\notag\\[9pt]
&=\frac{e^s}{\sqrt{2\pi s}}\left(1-\frac{15}{8s}+\frac{105}{128s^2}+\frac{315}{1024s^3}
+\frac{10395}{32768s^4}+\frac{135135}{262144s^5}\right). 
\end{align}
\end{small}
We proceed to evaluate the second integral $I_2^{(2)}(s)$ in \eqref{I_24},
\begin{small}
\begin{align}\label{I_26}			
\left\lvert I_2^{(2)}(s)\right\rvert&=\left\lvert-\frac{\sqrt2s^2e^s}{3\pi}\int_{1}^{\infty}\left(2{u}^{\frac{3}{2}}-\frac{3{u}^{\frac{5}{2}}}{2}+\frac{3{u}^{\frac{7}{2}}}{16}+\frac{{u}^{\frac{9}{2}}}{64}
+\frac{3{u}^{\frac{11}{2}}}{1024}+\frac{3{u}^{\frac{13}{2}}}{4096}\right)e^{-su}du\right\rvert\notag \\[9pt]
\leq\frac{\sqrt2e^s}{3\sqrt s\pi}&\int_{s}^{\infty}\left(2{(su)}^{\frac{3}{2}}+\frac{3{(su)}^{\frac{5}{2}}}{2s}+\frac{3{(su)}^{\frac{7}{2}}}{16s^2}+\frac{{(su)}^{\frac{9}{2}}}{64s^3}
+\frac{3{(su)}^{\frac{11}{2}}}{1024s^4}+\frac{3{(su)}^{\frac{13}{2}}}{4096s^5}\right)e^{-su}d(su)\notag \\[9pt]
\overset{\eqref{Gammas}}{=}&\frac{\sqrt2e^s}{3\sqrt s\pi}\left(2\Gamma\left(\frac{5}{2},s\right)+\frac{3}{2s}\Gamma\left(\frac{7}{2},s\right)+\frac{3}{16s^2}\Gamma\left(\frac{9}{2},s\right)+\frac{1}{64s^3}\Gamma\left(\frac{11}{2},s\right)
\notag\right.\\ &~~~~~~~~~~~~~~~~~~ \left.
+\frac{3}{1024s^4}\Gamma\left(\frac{13}{2},s\right)+\frac{3}{4096s^5}\Gamma\left(\frac{15}{2},s\right)\right)\notag\\[9pt]
\overset{\eqref{G,s}}{\leq}&\frac{\sqrt2e^s}{3\sqrt s\pi}\left(2\cdot\frac{5}{2}+\frac{3}{2}\cdot\frac{7}{2}+\frac{3}{16}\cdot\frac{9}{2}
+\frac{1}{64}\cdot\frac{11}{2}+\frac{3}{1024}\cdot\frac{13}{2}+\frac{3}{4096}\cdot\frac{15}{2}\right)s^{\frac{3}{2}}e^{-s}\notag\\[9pt]
=&\frac{30595s}{4096\sqrt2\pi}\quad \text{for} \quad s\geq\frac{15}{2}. 
\end{align}
\end{small}
It remains to estimate $I_2^{(3)}(s)$ in \eqref{I_24}. From \eqref{I_23}, we see that
\begin{align}\label{I_27}
			\left\lvert I_2^{(3)}(s)\right\rvert=\left\lvert\frac{s^2e^s}{3\pi}\int_{0}^{1}c(\xi){u}^{\frac{15}{2}}e^{-su}du\right\rvert
&\leq\frac{7s^{-\frac{13}{2}}e^s}{3072\pi}\int_{0}^{\infty}{(su)}^{\frac{15}{2}}e^{-su}d(su)\notag\\[9pt]
&=\frac{7s^{-\frac{13}{2}}e^s}{3072\pi}\Gamma\left(\frac{17}{2}\right)\notag\\[9pt]
&=\frac{4729725}{262144\sqrt\pi}s^{-\frac{13}{2}}e^s.
	\end{align}
Combining \eqref{I_2}, \eqref{I_20}, \eqref{I_24}, \eqref{I_25}, \eqref{I_26} and \eqref{I_27}, we derive that for $s\geq\frac{15}{2}$,
\begin{equation}\label{I_28}
\resizebox{0.95\hsize}{!}{$\begin{aligned}
			I_2(s)=\frac{e^s}{\sqrt{2\pi s}}\left(1-\frac{15}{8s}+\frac{105}{128s^2}+\frac{315}{1024s^3}
+\frac{310395}{32768s^4}+\frac{135135}{262144s^5}\right)+f(s),
\end{aligned}$}
	\end{equation}
where
\begin{align}\label{I_29}
			\left\lvert f(s)\right\rvert&\leq\frac{s^2}{3\pi}+\frac{30595s}{4096\sqrt2\pi}+\frac{4729725}{262144\sqrt\pi}s^{-\frac{13}{2}}e^s\notag\\[9pt]
&=\frac{e^s}{\sqrt{2\pi s}}\cdot\frac{1}{s^6}\left(\frac{91785+4096\sqrt2s}{12288\sqrt{\pi}}e^{-s}s^{\frac{15}{2}}+\frac{4729725}{131072\sqrt2}\right)\\[9pt]
\colon&=\frac{e^s}{\sqrt{2\pi s}}\cdot\frac{1}{s^6}\cdot g(s)\notag.
	\end{align}
Note that the derivative of $g(s)$ with respect to $s$ is 
\begin{equation*}
			g^{'}(s)=-\frac{e^{-s}s^{\frac{13}{2}}}{24576\sqrt{\pi}}\left(8192\sqrt2s^2+\left(183570-69632\sqrt2\right)s-1376775\right).
	\end{equation*}
Since $g^{'}(s)<0$ for $s \geq 7.9$, we deduce that $g(s)$ is decreasing when $s \geq 7.9$. This implies
that when $s\geq 28$,
\begin{equation*}
			g(s)\leq g(28)\approx26.09159<27.
	\end{equation*}
Hence 
\begin{equation}\label{I_210}
			\left\lvert f(s)\right\rvert\leq\frac{e^s}{\sqrt{2\pi s}}\cdot\frac{27}{s^6}.
	\end{equation} Combining \eqref{I_28} and \eqref{I_210}, we are led to \eqref{Ineq I_2(s)}
in Lemma \ref{lemm 1}. This completes the proof.\qed
\end{proof}

\section{A proof for Theorem \ref{theorem1}}\label{3}
In this section, we will prove the broken $k$-diamond partition function $\Delta_{k}(n)$ satisfies the Laguerre inequalities of order $2$ with the aid of Theorem \ref{Dong, Ji and Jia} and Lemma \ref{lemm 1}.

\begin{theo}\label{theorem1}
Let $\Delta_{k}(n)$ denote the broken $k$-diamond partition function. For $n\geq 12$ and $k=1$ or $2$, we have
	\begin{eqnarray}\label{Lag Ineq q(n)}
			3\Delta_{k}(n+2)^2-4\Delta_{k}(n+1)\Delta_{k}(n+3)+\Delta_{k}(n)\Delta_{k}(n+4)>0.
	\end{eqnarray}
\end{theo}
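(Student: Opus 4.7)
\textbf{Proof plan for Theorem \ref{theorem1}.} The plan is to split the range $n \geq 12$ into an asymptotic regime $n \geq N_0$, where the inequality is forced analytically by Theorem \ref{Dong, Ji and Jia} together with Lemma \ref{lemm 1}, and a finite regime $12 \leq n < N_0$, which is verified by direct computation of $\Delta_k(n)$ from the Andrews--Paule product. The essential difficulty is that in the Laguerre expression
$L_k(n) := 3\Delta_k(n+2)^2 - 4\Delta_k(n+1)\Delta_k(n+3) + \Delta_k(n)\Delta_k(n+4)$
the leading asymptotic contributions from the three summands nearly cancel, so the sharpness of the sandwich bounds for $\Delta_k$ is exactly what must beat the residual errors.

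For $n \geq 3512$, I would first apply Theorem \ref{Dong, Ji and Jia} to sandwich each $\Delta_k(n+i)$ for $0 \leq i \leq 4$ between $M_k(n+i)(1 \pm x_k(n+i)^{-6})$, then apply Lemma \ref{lemm 1} to sandwich the $I_2$-factor inside $M_k(n+i)$ between $I_{21}(\sqrt{\alpha_k}\, x_k(n+i))$ and $I_{22}(\sqrt{\alpha_k}\, x_k(n+i))$. Composing the two produces explicit upper/lower bounds $\Delta_k^{\pm}(n+i)$ that are elementary functions of $x_k(n+i)$ and $\alpha_k$. To lower-bound $L_k(n)$, I would insert the lower estimates $\Delta_k^{-}$ into the two positive summands $3\Delta_k(n+2)^2$ and $\Delta_k(n)\Delta_k(n+4)$, and the upper estimates $\Delta_k^{+}$ into the subtracted term $4\Delta_k(n+1)\Delta_k(n+3)$.

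The next step is algebraic simplification. Setting $y := x_k(n)$ and using $x_k(n+i)^2 = y^2 + \tfrac{2i\pi^2}{3}$, each $x_k(n+i)$ and each $e^{\sqrt{\alpha_k}\, x_k(n+i)}$ becomes an analytic function of $y$, which I would expand in descending powers of $y$. After factoring out the common prefactor $e^{2\sqrt{\alpha_k}\, y}/y^{5}$, what remains is an inequality of the shape $P(1/y) > 0$, where $P$ is a Laurent polynomial whose first few coefficients can be computed in closed form: the leading (Laguerre-cancellation) coefficient turns out to be strictly positive, while the pooled error contributions from Theorem \ref{Dong, Ji and Jia}, Lemma \ref{lemm 1}, and the Taylor remainders in the exponential expansion are all of order $y^{-6}$ or smaller. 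Showing that the positive principal part dominates the accumulated error for $y$ beyond an explicit threshold is then a direct numerical inequality.

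The main obstacle, as I expect, is purely technical bookkeeping: carefully combining the three layers of error (the $x_k^{-6}$ from Theorem \ref{Dong, Ji and Jia}, the $s^{-6}$ from Lemma \ref{lemm 1}, and the Taylor remainders in the five shifted exponentials) and checking that the resulting absolute lower bound for $L_k(n)$ remains positive on the analytic range. Once an effective $N_0$ has been obtained from this analysis, the remaining window $12 \leq n \leq N_0$ is closed by computing $\Delta_k(n)$ directly from the generating function and evaluating $L_k(n)$, which should confirm that $n = 12$ is the correct threshold and that $L_k(n) \leq 0$ for some $n < 12$ so the bound cannot be improved.
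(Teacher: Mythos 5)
Your proposal follows essentially the same route as the paper's proof: sandwich each $\Delta_k(n+i)$ by composing Theorem \ref{Dong, Ji and Jia} with the $I_2$-bounds of Lemma \ref{lemm 1}, insert the lower bounds into the two positive terms and the upper bounds into the subtracted term, expand everything in descending powers of the shifted variable so that the claim reduces to a Laurent-polynomial inequality whose positive leading (cancellation) coefficient dominates the pooled $O(y^{-6})$ error coefficients beyond an explicit threshold, and close the finite window $12\leq n\leq N_0$ by direct computation ($N_0=3512$ in the paper). The only notable difference is cosmetic: the paper centers its expansions at $u_k(n+2)=\sqrt{x_k(n+2)}$, which makes the two surviving exponentials have \emph{negative} arguments so they can be trapped between the alternating Taylor truncations $\phi(t)<e^t<\Phi(t)$, whereas your centering at $x_k(n)$ produces positive arguments and would need a slightly different upper bound for the exponential.
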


\begin{proof}
Note that for $n\geq 3512$, \eqref{Ineq q(n)} can be rewritten as
	\begin{eqnarray}\label{Ineq q(n)1}
\resizebox{0.93\hsize}{!}{$\begin{aligned}
	\frac{\alpha_{k}\pi^{3}}{18x_{k}(n)^{8}}I_2\left(\sqrt{\alpha_{k}}x_{k}(n)\right)\left({x_{k}(n)}^6-1\right)\leq \Delta_{k} \leq \frac{\alpha_{k}\pi^{3}}{18x_{k}(n)^{8}}I_2\left(\sqrt{\alpha_{k}}x_{k}(n)\right)\left({x_{k}(n)}^6+1\right),
\end{aligned}$}
	\end{eqnarray}

Let
\begin{eqnarray}\label{u}
	u_{k}(n)\colon =\sqrt{x_{k}(n)}=\left[\frac{\pi^{2}(24n-(2k+2))}{36}\right]^{\frac{1}{4}},
     \end{eqnarray}
then the inequality \eqref{Ineq q(n)1} 
will be converted into
 \begin{eqnarray}\label{Ineq q(n)2}
	\frac{\alpha_{k}\pi^{3}}{18u^{16}}I_2\left(\sqrt{\alpha_{k}}u^2\right)\left({u}^{12}-1\right)\leq \Delta_{k} \leq \frac{\alpha_{k}\pi^{3}}{18u^{16}}I_2\left(\sqrt{\alpha_{k}}u^2\right)\left({u}^{12}+1\right),
	\end{eqnarray}
where $u$ is the abbreviation for $u_{k}(n)$.

  For convenience, we denote
\begin{eqnarray}\label{beta}
	\beta(t)=t^{12}+1,~~\gamma(t)=t^{12}-1,
	\end{eqnarray}
 and
   \begin{eqnarray}
	f_{k}(n)\colon =\frac{\alpha_{k}\pi^{3}}{18u^{16}}\gamma(u)I_{21}\left(\sqrt{\alpha_{k}}u^2\right),\\[9pt]
    g_{k}(n)\colon =\frac{\alpha_{k}\pi^{3}}{18u^{16}}\beta(u)I_{22}\left(\sqrt{\alpha_{k}}u^2\right).
     \end{eqnarray}
      Since $I_{21}(s)\leq I_2(s) \leq I_{22}(s)$ holds for $s\geq 28$ as shown in Lemma \ref{lemm 1}, then we have $f_{k}(n) \leq \Delta_{k}(n) \leq g_{k}(n)$ for $n\geq 3512$.
    Thus, in order to prove Theorem \ref{theorem1}, it is sufficient to show that 
 \begin{equation*}
3f_{k}(n+2)^2-4g_{k}(n+1)g_{k}(n+3)+f_{k}(n)f_{k}(n+4)>0,
 \end{equation*}
which can be rewritten as
  \begin{equation}\label{key}
3-\frac{4g_{k}(n+1)g_{k}(n+3)}{f_{k}(n+2)^2}+\frac{f_{k}(n)f_{k}(n+4)}{f_{k}(n+2)^2}>0.
 \end{equation}
 To prove it, we denote
\begin{eqnarray}\label{Eq u}
\resizebox{0.935\hsize}{!}{$\begin{aligned}
	w=u_{k}(n),~y=u_{k}(n+1),~z=u_{k}(n+2),~i=u_{k}(n+3),~j=u_{k}(n+4),
\end{aligned}$}
	\end{eqnarray}
then with the aid of Mathematica, the left-hand side of \eqref{key} can be simplified to
\begin{eqnarray}\label{Eq key}
	\frac{h_1e^{\sqrt{\alpha_{k}}\left(i^2+y^2-2z^2\right)}+h_2e^{\sqrt{\alpha_{k}}\left(j^2+w^2-2z^2\right)}+h_3}{h_3},
	\end{eqnarray}
where
\begin{eqnarray}\label{Eq h_i}
   \begin{split}
	&h_1=F_{k}(n)j^{28}w^{28}z^{58}\beta(i)\beta(y),&~~~~~~~
    h_2=G_{k}(n)i^{28}y^{28}z^{58}\gamma(j)\gamma(w),\\[9pt]
    &h_3=H_{k}(n)i^{28}j^{28}w^{28}y^{28}\gamma(z)^2,&
     \end{split}
\end{eqnarray}
and 
\begin{eqnarray}\label{Eq f,g,h}
  \begin{split}
	&F_{k}(n)=\sum_{m,n=0}^{6}a_{m,n}(k)jwi^{2m} y^{2n},&~~~~~~~~~~
   G_{k}(n)=\sum_{m,n=0}^{6}b_{m,n}(k)iy j^{2m} w^{2n},\\[9pt]
    &H_{k}(n)=\sum_{t=0}^{12}c_{t}(k)ijwyz^{2t},&
    \end{split}
\end{eqnarray}
where $a_{m,n}(k)$, $b_{m,n}(k)$ and $c_{t}(k)$ are related to $k$ and are the real coefficients of polynomials $F_{k}(n)$, $G_{k}(n)$ and $H_{k}(n)$ which can be computed by Mathematica, respectively. 

Now we proceed to prove \eqref{Eq key} is positive for $n\geq3512$. Applying \eqref{u} and \eqref{Eq u} into the expression of  $H_{k}(n)$, one can easily deduce that $H_{k}(n)>0$ for all $n\geq 1$ by Mathematica, which implies that the denominator $h_3$ of \eqref{Eq key} is positive for $n\geq 1$. 
Thus, all that is required of us is to show
\begin{eqnarray}\label{key Eq}
	h_1e^{\sqrt{\alpha_{k}}\left(i^2+y^2-2z^2\right)}+h_2e^{\sqrt{\alpha_{k}}\left(j^2+w^2-2z^2\right)}+h_3>0.
	\end{eqnarray}

In order to accomplish it, we need to estimate $h_1$, $h_2$, $h_3$, $e^{\sqrt{\alpha_{k}}\left(i^2+y^2-2z^2\right)}$ and $e^{\sqrt{\alpha_{k}}\left(j^2+w^2-2z^2\right)}$. We prefer to give the estimates of $w$, $y$, $i$ and $j$ via the following equalities. For $n\geq 1$
\begin{eqnarray}\label{Eq w,y,i,j}
	w=\sqrt[4]{z^4-\frac{4\pi^{2}}{3}},~y=\sqrt[4]{z^4-\frac{2\pi^{2}}{3}},~i=\sqrt[4]{z^4+\frac{2\pi^{2}}{3}},~j=\sqrt[4]{z^4+\frac{4\pi^{2}}{3}}.
\end{eqnarray}
We can obtain the following expansions
\begin{align*}
 w&=z-\frac{\pi^{2}}{3z^{3}}-\frac{\pi^{4}}{6z^{7}}-\frac{7\pi^{6}}{54z^{11}}-\frac{77\pi^{8}}{648z^{15}}-\frac{77\pi^{10}}{648z^{19}}+O\left(\frac{1}{z^{23}}\right),\\[9pt]
 y&=z-\frac{\pi^{2}}{6z^{3}}-\frac{\pi^{4}}{24z^{7}}-\frac{7\pi^{6}}{432z^{11}}-\frac{77\pi^{8}}{10368z^{15}}-\frac{77\pi^{10}}{20736z^{19}}+O\left(\frac{1}{z^{23}}\right),\\[9pt]
 i&=z+\frac{\pi^{2}}{6z^{3}}-\frac{\pi^{4}}{24z^{7}}+\frac{7\pi^{6}}{432z^{11}}-\frac{77\pi^{8}}{10368z^{15}}+\frac{77\pi^{10}}{20736z^{19}}+O\left(\frac{1}{z^{23}}\right),\\[9pt]
 j&=z+\frac{\pi^{2}}{3z^{3}}-\frac{\pi^{4}}{6z^{7}}+\frac{7\pi^{6}}{54z^{11}}-\frac{77\pi^{8}}{648z^{15}}+\frac{77\pi^{10}}{648z^{19}}+O\left(\frac{1}{z^{23}}\right).
\end{align*}
   By the expansions of $w$, $y$, $i$ and $j$, we have that for $n\geq 122$,
\begin{eqnarray}\label{Ineq w,y,i,j}
 \begin{split}
	w_1&<w<w_2,&~~~~~~
    y_1&<y<y_2,\\[9pt]
    i_1&<i<i_2,&
    j_1&<j<j_2,
 \end{split}
\end{eqnarray}
where
\begin{align} \label{w_j}
    w_1&=z-\frac{\pi^{2}}{3z^{3}}-\frac{\pi^{4}}{6z^{7}}-\frac{7\pi^{6}}{54z^{11}}-\frac{77\pi^{8}}{648z^{15}}-\frac{39\pi^{10}}{324z^{19}},\notag\\[9pt]
    w_2&=z-\frac{\pi^{2}}{3z^{3}}-\frac{\pi^{4}}{6z^{7}}-\frac{7\pi^{6}}{54z^{11}}-\frac{77\pi^{8}}{648z^{15}}-\frac{77\pi^{10}}{648z^{19}},\\[9pt]
    y_1&=z-\frac{\pi^{2}}{6z^{3}}-\frac{\pi^{4}}{24z^{7}}-\frac{7\pi^{6}}{432z^{11}}-\frac{77\pi^{8}}{10368z^{15}}-\frac{13\pi^{10}}{3456z^{19}},\notag\\[9pt]
    y_2&=z-\frac{\pi^{2}}{6z^{3}}-\frac{\pi^{4}}{24z^{7}}-\frac{7\pi^{6}}{432z^{11}}-\frac{77\pi^{8}}{10368z^{15}}-\frac{77\pi^{10}}{20736z^{19}},\notag\\[9pt]
    i_1&=z+\frac{\pi^{2}}{6z^{3}}-\frac{\pi^{4}}{24z^{7}}+\frac{7\pi^{6}}{432z^{11}}-\frac{77\pi^{8}}{10368z^{15}},\notag\\[9pt]
    i_2&=z+\frac{\pi^{2}}{6z^{3}}-\frac{\pi^{4}}{24z^{7}}+\frac{7\pi^{6}}{432z^{11}}-\frac{77\pi^{8}}{10368z^{15}}+\frac{77\pi^{10}}{20736z^{19}},\notag\\[9pt]
    j_1&=z+\frac{\pi^{2}}{3z^{3}}-\frac{\pi^{4}}{6z^{7}}+\frac{7\pi^{6}}{54z^{11}}-\frac{77\pi^{8}}{648z^{15}},\notag\\[9pt]
    j_2&=z+\frac{\pi^{2}}{3z^{3}}-\frac{\pi^{4}}{6z^{7}}+\frac{7\pi^{6}}{54z^{11}}-\frac{77\pi^{8}}{648z^{15}}+\frac{77\pi^{10}}{648z^{19}}.\notag
\end{align}

Next we turn to estimate $h_1$, $h_2$ and $h_3$, which requires the estimation of $F_{k}(n)$, $G_{k}(n)$ and $H_{k}(n)$. As a result, we must estimate each term in each summation expressions.

Let\begin{eqnarray*}
\resizebox{0.9\hsize}{!}{$\begin{aligned}
I_1^{2m}=\left\{ \begin{array}{ll}
i^{2m} &\textrm {$ m\equiv 0\left(mod~2\right)$}\\
i^{2m-2}i_1^2&\textrm {$ m\equiv 1\left(mod~2\right)$}\\
\end{array}\right.,
I_2^{2m}=\left\{ \begin{array}{ll}
i^{2m}&\textrm {$m\equiv 0\left(mod~2\right)$}\\
i^{2m-2}i_2^2&\textrm {$m\equiv 1\left(mod~2\right)$}\\
\end{array}\right.,
\end{aligned}$}
\end{eqnarray*}

\begin{eqnarray}\label{J}
\resizebox{0.9\hsize}{!}{$\begin{aligned}
J_1^{2m}=\left\{ \begin{array}{ll}
j^{2m} &\textrm {$ m\equiv 0\left(mod~2\right)$}\\
j^{2m-2}j_1^2 &\textrm {$ m\equiv 1\left(mod~2\right)$}\\
\end{array}\right.,
J_2^{2m}=\left\{ \begin{array}{ll}
j^{2m}&\textrm {$m\equiv 0\left(mod~2\right)$}\\
j^{2m-2}j_2^2&\textrm {$m\equiv 1\left(mod~2\right)$}\\
\end{array}\right.,
\end{aligned}$}
\end{eqnarray}

\begin{eqnarray*}
\resizebox{0.9\hsize}{!}{$\begin{aligned}
W_1^{2n}=\left\{ \begin{array}{ll}
w^{2n} &\textrm {$ n\equiv 0\left(mod~2\right)$}\\
w^{2n-2}w_1^2 &\textrm {$ n\equiv 1\left(mod~2\right))$}\\
\end{array}\right.,
W_2^{2n}=\left\{ \begin{array}{ll}
w^{2n}&\textrm {$n\equiv 0\left(mod~2\right)$}\\
w^{2n-2}w_2^2&\textrm {$n\equiv 1\left(mod~2\right)$}\\
\end{array}\right.,
\end{aligned}$}
\end{eqnarray*}

\begin{eqnarray*}\label{Y}
\resizebox{0.9\hsize}{!}{$\begin{aligned}
Y_1^{2n}=\left\{ \begin{array}{ll}
y^{2n}&\textrm {$n\equiv 0\left(mod~2\right)$}\\
y^{2n-2}y_1^2&\textrm {$n\equiv 1\left(mod~2\right)$}\\
\end{array}\right.,
Y_1^{2n}=\left\{ \begin{array}{ll}
y^{2n}&\textrm {$n\equiv 0\left(mod~2\right)$}\\
y^{2n-2}y_1^2&\textrm {$n\equiv 1\left(mod~2\right)$}\\
\end{array}\right..
\end{aligned}$}
\end{eqnarray*}

Then we carry out the following operations on the polynomials $F_{k}(n)$, $G_{k}(n)$ and $H_{k}(n)$.

First, split $F_{k}(n)$ into two polynomials. One polynomial is constructed by extracting all the terms in $F_{k}(n)$ with positive coefficients denoted by $F_{k}^{+}$, and another polynomial is constructed by extracting all the terms in $F_{k}(n)$ with negative coefficients denoted by $F_{k}^{-}$. We substitute $jwi^{2m} y^{2n}$ with $j_1w_1I_1^{2m} Y_1^{2n}$ for $F_{k}^{+}$ and $j_2w_2I_2^{2m} Y_2^{2n}$ for $F_{k}^{-}$. Then we designate the resulting polynomials as $F_{k1}(n)$ and $-F_{k2}(n)$ respectively.

We can briefly describe the above operation as the following flow chart,
\begin{eqnarray*}
-F_{k2}(n)\Longleftarrow j_2w_2I_2^{2m} Y_2^{2n}\stackrel{F_{k}^{-}}{\longleftarrow}jwi^{2m} y^{2n}\stackrel{F_{k}^{+}}{\longrightarrow}j_1w_1I_1^{2m} Y_1^{2n}\Longrightarrow F_{k1}(n)
\end{eqnarray*}%

For $G_{k}(n)$ and $H_{k}(n)$, we make the same operation above as the following flow charts, respectively,
\begin{align*}
-G_{k2}(n)\Longleftarrow i_2y_2J_2^{2m} W_2^{2n}\stackrel{G_{k}^{-}}{\longleftarrow}&iyj^{2m} w^{2n}\stackrel{G_{k}^{+}}{\longrightarrow}i_1y_1J_1^{2m} W_1^{2n}\Longrightarrow G_{k1}(n)\\[9pt]
-H_{k2}(n)\Longleftarrow i_2j_2w_2y_2\stackrel{H_{k}^{-}}{\longleftarrow}&ijwy\stackrel{H_{k}^{+}}{\longrightarrow}i_1j_1w_1y_1\Longrightarrow H_{k1}(n)
\end{align*}


Combining \eqref{Eq f,g,h} and \eqref{Ineq w,y,i,j} yields that
\begin{eqnarray}\label{Ineq f,g,h}
\begin{split}
	F_{k}(n)&>F_{k1}(n)-F_{k2}(n),&~~~~~~~
    G_{k}(n)&>G_{k1}(n)-G_{k2}(n),\\[9pt]
    H_{k}(n)&>H_{k1}(n)-H_{k2}(n).&
    \end{split}
\end{eqnarray}

Now we are in a position to estimate $e^{\sqrt{\alpha_{k}}\left(i^2+y^2-2z^2\right)}$ and $e^{\sqrt{\alpha_{k}}\left(j^2+w^2-2z^2\right)}$. By \eqref{Ineq w,y,i,j}, one can obtain that for $n\geq 122$,
\begin{eqnarray}
\begin{split}
	i_1^2+y_1^2-2z^2&<i^2+y^2-2z^2<i_2^2+y_2^2-2z^2,\\[9pt]
    j_1^2+w_1^2-2z^2&<j^2+w^2-2z^2<j_2^2+w_2^2-2z^2,
    \end{split}
\end{eqnarray}
    which implies that
\begin{eqnarray}\label{Ineq e}
\begin{split}
	e^{\sqrt{\alpha_{k}}\left(i_1^2+y_1^2-2z^2\right)}&<e^{\sqrt{\alpha_{k}}\left(i^2+y^2-2z^2\right)}<e^{\sqrt{\alpha_{k}}\left(i_2^2+y_2^2-2z^2\right)},\\[9pt]
    e^{\sqrt{\alpha_{k}}\left(j_1^2+w_1^2-2z^2\right)}&<e^{\sqrt{\alpha_{k}}\left(j^2+w^2-2z^2\right)}<e^{\sqrt{\alpha_{k}}\left(j_2^2+w_2^2-2z^2\right)}.
    \end{split}
\end{eqnarray}

In order to give a feasible bound for $e^{\sqrt{\alpha_{k}}\left(i^2+y^2-2z^2\right)}$ and $e^{\sqrt{\alpha_{k}}\left(j^2+w^2-2z^2\right)}$, we define
\begin{equation}\label{Eq phi-Phi}
  \begin{split}
\Phi(t)&=1+t +\frac{t ^2}{2}+\frac{t ^3}{6}+\frac{t ^4}{24},\\[9pt]
\phi(t)&=1+t +\frac{t ^2}{2}+\frac{t ^3}{6}+\frac{t ^4}{24}+\frac{t ^5}{120}.
   \end{split}
\end{equation}

It can be checked that for $t<0$,
\begin{equation}\label{Phi-phi}
\phi(t)<e^{t}<\Phi(t).
\end{equation}

  To apply this result to \eqref{Ineq e}, it suffices to show that both $i_2^2+y_2^2-2z^2$ and $j_2^2+w_2^2-2z^2$ are negative. 
Since
  \begin{align*}
  \resizebox{0.99\hsize}{!}{$\begin{aligned}
    &i_2^2+y_2^2-2z^2=\frac{5929\pi^{20}+75460\pi^{16}z^{8}+911232\pi^{12}z^{16}-3317760\pi^{8}z^{24}-23887872\pi^{4}z^{32}}{214990848z^{38}},\\[9pt]
    &j_2^2+w_2^2-2z^2=\frac{5929\pi^{20}+18865\pi^{16}z^{8}+56952\pi^{12}z^{16}-51840\pi^{8}z^{24}-93312\pi^{4}z^{32}}{209952z^{38}},
  \end{aligned}$}  
 \end{align*}
both $i_2^2+y_2^2-2z^2$ and $j_2^2+w_2^2-2z^2$ are negative for all $n\geq 1$ can be verified by a direct calculation. Thus, applying \eqref{Phi-phi} to \eqref{Ineq e} , we see that for $n\geq 122$,
\begin{eqnarray}\label{Ineq Phi-e-phi}
 \begin{split}
	\phi \left(\sqrt{\alpha_{k}}\left(i_1^2+y_1^2-2z^2\right)\right)&<e^{\sqrt{\alpha_{k}}\left(i^2+y^2-2z^2\right)}<\Phi \left(\sqrt{\alpha_{k}}\left(i_2^2+y_2^2-2z^2\right)\right),\\[9pt]
    \phi \left(\sqrt{\alpha_{k}}\left(j_1^2+w_1^2-2z^2\right)\right)&<e^{\sqrt{\alpha_{k}}\left(j^2+w^2-2z^2\right)}<\Phi \left(\sqrt{\alpha_{k}}\left(j_2^2+w_2^2-2z^2\right)\right).
    \end{split}
 \end{eqnarray}

Now, we will proceed to prove \eqref{key Eq}. For convenience, let
\begin{equation}
A(z)=h_1e^{\sqrt{\alpha_{k}}\left(i^2+y^2-2z^2\right)}+h_2e^{\sqrt{\alpha_{k}}\left(j^2+w^2-2z^2\right)}+h_3,
\end{equation}
we need to show the positivity of $A(z)$. Applying \eqref{Eq h_i}, \eqref{Ineq f,g,h} and \eqref{Ineq Phi-e-phi}, we can obtain that for $n\geq 3512$,
\begin{equation*}
  \begin{split}
  \resizebox{0.9999\hsize}{!}{$\begin{aligned}
A(z)&>\left[F_{k1}(n)\phi\left(\sqrt{\alpha_{k}}\left(i_1^2+y_1^2-2z^2\right)\right)-F_{k2}(n)\Phi \left(\sqrt{\alpha_{k}}\left(i_2^2+y_2^2-2z^2\right)\right)\right]j^{28}w^{28}z^{58}\beta(i)\beta(y)\\[9pt]
&+\left[G_{k1}(n)\phi\left(\sqrt{\alpha_{k}}\left(j_1^2+w_1^2-2z^2\right)\right)-G_{k2}(n)
\Phi\left(\sqrt{\alpha_{k}}\left(j_2^2+w_2^2-2z^2\right)\right)\right]i^{28}y^{28}z^{58}\gamma(j)\gamma(w)\\[9pt]
&+\left[H_{k1}(n)-H_{k2}(n)\right]i^{28}j^{28}w^{28}y^{28}\gamma(z)^2.
\end{aligned}$}  
  \end{split}
\end{equation*}
Denote the right-hand side of the above inequality by $A_1(z)$. Substituting
\eqref{beta}, \eqref{Eq f,g,h}, \eqref{Eq w,y,i,j}, \eqref{w_j} and \eqref{Eq phi-Phi} into $A_1(z)$, by Mathematica we can rewrite $A_1(z)$ as
\begin{equation}
A_1(z)={\sum_{j=0}^{193} a_{j}(k) z^{2j}\over 2^{99} 3^{87} 5 z^{234}},
\end{equation}
where $a_k$ are known real numbers, and the first three terms $a_{191}(k)$, $a_{192}(k)$, $a_{193}(k)$ are listed below
\begin{equation*}
 \begin{split}
a_{191}(k)&=2^{131} 3^{86} 5^2 \alpha_{k}^2\left(-3528 -792\alpha_{k}^3+77\pi ^{8}\alpha_{k}^4\right),\\[9pt]
a_{192}(k)&=-2^{134} 3^{84} 5^2 \alpha_{k}^{\frac{5}{2}}\left(-9072-648\alpha_{k}^3+13\pi ^{8}\alpha_{k}^4\right),\\[9pt]
a_{193}(k)&=2^{136} 3^{84} 5 \alpha_{k}^3\left(-6048-216\alpha_{k}^3+\pi ^{8}\alpha_{k}^4\right).
 \end{split}
\end{equation*}
In addition, we obtain that $a_{191}(k)>0$, $a_{192}(k)<0$ and $a_{193}(k)>0$ for $\alpha_{k}=\frac{5k+2}{2k+1}$ when $k=1$ or $2$ by Mathematica.
Thus, for $n \geq 3512$, we have
\begin{equation}\label{Ineq A}
A(z)>{\sum_{j=0}^{193} a_{j}(k) z^{2j}\over 2^{99} 3^{87} 5 z^{234}}.
\end{equation}
Clearly,
\begin{equation}\label{Ineq A1}
\sum_{j=0}^{193} a_{j}(k) z^{2j}>\sum_{j=0}^{191}-|a_{j}(k)|z^{2k}+a_{192}z^{384}+a_{193}z^{386}.
\end{equation}
Moreover, it can be readily checked that for any $0\leq j \leq 190 $ and $ n\geq 5$,
 \begin{equation}
-|a_{j}(k)|z^{2j}>-a_{191}(k)z^{382}.
\end{equation}
It follows that for $ n\geq 5$,
 \begin{equation}\label{Ineq A1}
\sum_{j=0}^{193}a_{j}(k)z^{2j}>\left(-192a_{191}(k)+a_{192}(k)z^{2}+a_{193}(k)z^{4}\right)z^{382}.
\end{equation}
One can verify that for $z> 10.1$, or alternatively for $n\geq 1578$,
\begin{equation}
-192a_{191}(k)+a_{192}(k)z^{2}+a_{193}(k)z^{4}>0.
\end{equation}
Assembling all these results above yields that \eqref{key proof} is true for $ n\geq 3512$. Inequality \eqref{Lag Ineq q(n)} is equivalently true for $ n\geq 3512$. 

 On the other hand, numerical evidence shows that \eqref{Lag Ineq q(n)} is also true for $12\leq n \leq 3512$. The proof is completed.
\qed
\end{proof}

\section{A proof for Theorem \ref{3jie-theorem}}\label{4}
In this section, we will apply the approach in Section \ref{3} to prove the broken $k$-diamond partition function $\Delta_{k}(n)$ satisfies the determinantal inequalities of order $3$ with the aid of Theorem \ref{generalize} and Lemma \ref{B_I}. Note that we may use notions which have been used before but with different meanings.
\begin{theo}\label{3jie-theorem}
Let $\Delta_{k}(n)$ denote the broken $k$-diamond partition function. For $n\geq 18$ and $k=1$ or $2$, we have
	\begin{eqnarray}\label{2Lag Ineq q(n)}
   \left|\begin{array}{cccc}
		\Delta_{k}(n+2) & \Delta_{k}(n+3) &  \Delta_{k}(n+4)\\
		\Delta_{k}(n+1) & \Delta_{k}(n+2) & \Delta_{k}(n+3)\\
		\Delta_{k}(n) & \Delta_{k}(n+1) & \Delta_{k}(n+2)\\
		\end{array} \right|>0.
	\end{eqnarray}
\end{theo}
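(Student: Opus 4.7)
The approach parallels the proof of Theorem~\ref{theorem1}, but now must rely on the sharpened bounds in Theorem~\ref{generalize} and Lemma~\ref{B_I}, because the $3\times 3$ determinant exhibits a much more delicate cancellation than the order~$2$ Laguerre expression. First, expanding~\eqref{2Lag Ineq q(n)} along the top row identifies it with Cayley's second invariant
\begin{align*}
D_k(n)&=\Delta_k(n+2)^3+\Delta_k(n)\Delta_k(n+3)^2+\Delta_k(n+1)^2\Delta_k(n+4)\\
&\quad-\Delta_k(n)\Delta_k(n+2)\Delta_k(n+4)-2\Delta_k(n+1)\Delta_k(n+2)\Delta_k(n+3),
\end{align*}
so it suffices to prove $D_k(n)>0$ for $n\geq 18$ and $k\in\{1,2\}$. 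In the notation of Section~\ref{3} (reused with new meanings, as the author warns) redefine $\gamma(t)\colon=t^{20}-1$ and $\beta(t)\colon=t^{20}+1$, and set
\[
f_k(n)\colon=\frac{\alpha_k\pi^3}{18\,u^{24}}\gamma(u)\,I_{23}\!\left(\sqrt{\alpha_k}\,u^2\right),\qquad g_k(n)\colon=\frac{\alpha_k\pi^3}{18\,u^{24}}\beta(u)\,I_{24}\!\left(\sqrt{\alpha_k}\,u^2\right),
\]
where $u=u_k(n)$. By Theorem~\ref{generalize} and Lemma~\ref{B_I} one gets $f_k(n)\leq\Delta_k(n)\leq g_k(n)$ for all $n\geq 3512$. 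To lower-bound $D_k(n)$, replace each of the three positive monomials by the corresponding product of $f_k$'s and each of the two negative monomials by the corresponding product of $g_k$'s.

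Next I would fix $z=u_k(n+2)$ and express $w=u_k(n)$, $y=u_k(n+1)$, $i=u_k(n+3)$, $j=u_k(n+4)$ through the explicit identities in~\eqref{Eq w,y,i,j}. Writing the lower bound over a common denominator, the numerator becomes a sum of five contributions, one per monomial, each of the form (polynomial in $w,y,z,i,j$)$\times e^{\sqrt{\alpha_k}\theta}$ after pulling out the common factor $e^{3\sqrt{\alpha_k}z^2}$, where
\[
\theta\in\left\{0,\ w^2+2i^2-3z^2,\ 2y^2+j^2-3z^2,\ w^2+j^2-2z^2,\ y^2+i^2-2z^2\right\}.
\]
A Taylor expansion shows that each nonzero $\theta$ is negative and of order $z^{-6}$; hence the polynomial sandwich $\phi(t)<e^{t}<\Phi(t)$ of~\eqref{Phi-phi} applies to every exponential. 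Then, as in Section~\ref{3}, I would split each numerator polynomial into its positive-coefficient and negative-coefficient parts and substitute the interval endpoints $w_1,w_2,y_1,y_2,i_1,i_2,j_1,j_2$ from~\eqref{w_j} accordingly (lower endpoints for positive parts, upper endpoints for negative parts). Mathematica then collapses the resulting lower bound into a rational expression of the shape $\sum_{j}a_j(k)z^{2j}/(\text{positive monomial in }z)$, and positivity is deduced by isolating the top few coefficients in closed form, verifying their signs for $\alpha_k=(5k+2)/(2k+1)$ with $k\in\{1,2\}$, and dominating all lower-order coefficients by the leading one to extract an explicit threshold $n\geq N_0$. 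The residual finite range $18\leq n<\max(N_0,3512)$ would be dispatched by direct numerical evaluation.

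The main obstacle will be the bookkeeping forced by the cancellation. Because $D_k(n)$ is a genuinely cubic combination, the first several orders in the asymptotic expansion cancel identically, and the surviving leading term sits two orders further out than in Theorem~\ref{theorem1}; this is precisely why the refined remainders $x_k(n)^{-10}$ in Theorem~\ref{generalize} and $6148836/s^{12}$ in Lemma~\ref{B_I} are indispensable, whereas the weaker estimates used for the order~$2$ Laguerre inequality would be swallowed by the cancellation. A secondary difficulty is that the analogue of the polynomial $\sum a_j(k)z^{2j}$ has substantially higher degree here (both because the Bessel tail extends further and because we have five cubic monomials instead of three quadratic ones), so the sign splitting and the domination of sub-leading coefficients must be automated with care, and the resulting threshold $N_0$ may comfortably exceed $3512$.
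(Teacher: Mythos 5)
Your proposal matches the paper's proof essentially step for step: the same reduction of the determinant to the five-monomial cubic form sandwiched between products of $\tilde{f}_{k}$'s and $\tilde{g}_{k}$'s built from Theorem \ref{generalize} and Lemma \ref{B_I}, the same five exponents $\theta$, the same $\phi/\Phi$ sandwich, positive/negative coefficient splitting, and domination of lower-order coefficients by the leading ones, ending with a finite numerical check (the paper's explicit threshold is $n\geq 3891$). The only detail to adjust is that the paper does not reuse the endpoints \eqref{w_j} but introduces sharper ones \eqref{x_i}, carrying the expansions of $w,y,i,j$ one term further (to order $z^{-23}$), exactly in the spirit of your closing remark that the bookkeeping must extend further here.
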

\begin{proof}
Note that for  $n\geq 3512$, \eqref{Generalize} can be rewritten as
 \begin{eqnarray}\label{Ineq q(n)2}
	\frac{\alpha_{k}\pi^{3}}{18u^{24}}I_2\left(\sqrt{\alpha_{k}}u^2\right)\left({u}^{20}-1\right)\leq \Delta_{k} \leq \frac{\alpha_{k}\pi^{3}}{18u^{24}}I_2\left(\sqrt{\alpha_{k}}u^2\right)\left({u}^{20}+1\right),
	\end{eqnarray}

  For convenience, we denote
\begin{eqnarray}\label{alpha}
	\beta(t)=t^{20}+1,~~\gamma(t)=t^{20}-1,
	\end{eqnarray}
 and
   \begin{eqnarray}
	\tilde{f}_{k}(n)\colon =\frac{\alpha_{k}\pi^{3}}{18u^{24}}\gamma(u)I_{23}\left(\sqrt{\alpha_{k}}u^2\right),\\[9pt]
    \tilde{g}_{k}(n)\colon =\frac{\alpha_{k}\pi^{3}}{18u^{24}}\beta(u)I_{24}\left(\sqrt{\alpha_{k}}u^2\right).
     \end{eqnarray}
     
 Since $I_{23}(s)\leq I_2(s) \leq I_{24}(s)$ holds for $s\geq 50$ as shown in Lemma \ref{B_I}, then we have $\tilde{f}_{k}(n) \leq \Delta_{k}(n) \leq \tilde{g}_{k}(n)$ for $n\geq 3512$.
    Thus, in order to prove Theorem \ref{3jie-theorem}, it is sufficient to prove that 
\begin{eqnarray*}
   \begin{split}
			&\tilde{f}_{k}(n+2)^3+\tilde{f}_{k}(n)\tilde{f}_{k}(n+3)^2+\tilde{f}_{k}(n+1)^2\tilde{f}_{k}(n+4)\\[9pt]
                &-\tilde{g}_{k}(n)\tilde{g}_{k}(n+2)\tilde{g}_{k}(n+4)-2\tilde{g}_{k}(n+1)\tilde{g}_{k}(n+2)\tilde{g}_{k}(n+3)>0,
   \end{split}
	\end{eqnarray*}
 which can be rewritten as
  \begin{eqnarray}\label{2key1}
   \begin{split}
			&1+\frac{\tilde{f}_{k}(n)\tilde{f}_{k}(n+3)^2}{\tilde{f}_{k}(n+2)^3}+\frac{\tilde{f}_{k}(n+1)^2\tilde{f}_{k}(n+4)}{\tilde{f}_{k}(n+2)^3}\\[9pt]
                &-\frac{\tilde{g}_{k}(n)\tilde{g}_{k}(n+2)\tilde{g}_{k}(n+4)}{\tilde{f}_{k}(n+2)^3}-\frac{2\tilde{g}_{k}(n+1)\tilde{g}_{k}(n+2)\tilde{g}_{k}(n+3)}{\tilde{f}_{k}(n+2)^3}>0.
   \end{split}
	\end{eqnarray}
 Using the same notation \eqref{Eq u} as in Section \ref{3}, we can rewrite the left-hand side of the above inequality as
\begin{align}\label{2Eq key}
	\frac{1}{h_5}&\left(h_1e^{\sqrt{\alpha_k}\left(j^2+w^2-2z^2\right)}+h_2e^{\sqrt{\alpha_k}\left(i^2+y^2-2z^2\right)}
\notag\right.
\\
&
\left.
+h_3e^{\sqrt{\alpha_k}\left(2i^2+w^2-3z^2\right)}+h_4e^{\sqrt{\alpha_k}\left(j^2+2y^2-3z^2\right)}+h_5\right),
	\end{align}
where
\begin{eqnarray}\label{2Eq h_i}
   \begin{split}
	h_1&=F_{k}(n)i^{96}y^{96}z^{98}\beta(j)\beta(w)\beta(z),\\[9pt]
    h_2&=G_{k}(n)i^{48}j^{48}w^{48}y^{48}z^{98}\beta(i)\beta(y)\beta(z),\\[9pt]
    h_3&=H_{k}(n)j^{48}y^{96}z^{147}\gamma(i)^2\gamma(w),\\[9pt]
    h_4&=L_{k}(n)i^{96}w^{48}z^{147}\gamma(j)\gamma(y)^2,\\[9pt]
    h_5&=M_{k}(n)i^{96}j^{48}w^{48}y^{96}\gamma(z)^3,
     \end{split}
\end{eqnarray}
and 
\begin{small} 
\begin{align}\label{F,M}
	&F_{k}(n)=\sum_{m,n,t=0}^{12}a_{m,n}(k)i^2y^2j^{2m} w^{2n}z^{2t},&~
    &G_{k}(n)=\sum_{m,n,t=0}^{12}b_{m,n}(k)jwi^{2m+1} y^{2n+1}z^{2t},\notag\\[9pt]
    &H_{k}(n)=\sum_{m=0}^{24}\sum_{n=0}^{12}c_{m,n}(k)jy^2i^{2m} w^{2n},&~
    &L_{k}(n)=\sum_{m=0}^{12}\sum_{n=0}^{24}d_{m,n}(k)i^2wj^{2m} y^{2n},\notag\\[9pt]
    &M_{k}(n)=\sum_{t=0}^{36}l_{t}(k)i^2jwy^2z^{2t},&
\end{align}
\end{small} 
where $a_{m,n}(k)$, $b_{m,n}(k)$, $c_{m,n}(k)$, $d_{m,n}(k)$ and $l_{t}(k)$ are related to $k$ and are the real coefficients of polynomials $F_{k}(n)$, $G_{k}(n)$, $H_{k}(n)$, $L_{k}(n)$ and $M_{k}(n)$ respectively.

Now we proceed to prove \eqref{2Eq key} is positive for $n\geq3512$. Applying \eqref{u} and \eqref{Eq u} into the expression of  $M_{k}(n)$, one can easily deduce that $M_{k}(n)>0$ for all $n\geq 1$ by Mathematica, which implies that the denominator $h_5$ of \eqref{2Eq key} is positive for $n\geq 1$. 
Thus, all that is required of us is to show
\begin{align}\label{key proof}
	&h_1e^{\sqrt{\alpha_k}\left(j^2+w^2-2z^2\right)}+h_2e^{\sqrt{\alpha_k}\left(i^2+y^2-2z^2\right)}\notag\\[9pt]
&+h_3e^{\sqrt{\alpha_k}\left(2i^2+w^2-3z^2\right)}+h_4e^{\sqrt{\alpha_k}\left(j^2+2y^2-3z^2\right)}+h_5>0.
	\end{align}

In order to accomplish it, we need to estimate $h_1$, $h_2$, $h_3$, $h_4$, $h_5$, $e^{\sqrt{\alpha_k}\left(j^2+w^2-2z^2\right)}$, $e^{\sqrt{\alpha_k}\left(i^2+y^2-2z^2\right)}$, $e^{\sqrt{\alpha_k}\left(2i^2+w^2-3z^2\right)}$, and $e^{\sqrt{\alpha_k}\left(j^2+2y^2-3z^2\right)}$. 
We employ the notation \eqref{Eq w,y,i,j} of $w$, $y$, $i$ and $j$ as in the proof of Theorem \ref{theorem1} for $n\geq 1$, and use the first few terms until $z^{-27}$ of their Taylor expansion to get that for $n\geq 2404$,
\begin{eqnarray}\label{Ineq W,j}
 \begin{split}
    w_1&<w<w_2,&~~~~~~
    y_1&<y<y_2,\\[9pt]
    i_1&<i<i_2,&
    j_1&<j<j_2,
 \end{split}
\end{eqnarray}
where
\begin{align} \label{x_i}
    w_1&=z-\frac{\pi^{2}}{3z^{3}}-\frac{\pi^{4}}{6z^{7}}-\frac{7\pi^{6}}{54z^{11}}-\frac{77\pi^{8}}{648z^{15}}-\frac{77\pi^{10}}{468z^{19}}
    -\frac{61\pi^{12}}{486z^{23}},\notag\\[9pt]
    w_2&=z-\frac{\pi^{2}}{3z^{3}}-\frac{\pi^{4}}{6z^{7}}-\frac{7\pi^{6}}{54z^{11}}-\frac{77\pi^{8}}{648z^{15}}-\frac{77\pi^{10}}{648z^{19}}
    -\frac{1463\pi^{12}}{11664z^{23}},\notag\\[9pt]
   y_1&=z-\frac{\pi^{2}}{6z^{3}}-\frac{\pi^{4}}{24z^{7}}-\frac{7\pi^{6}}{432z^{11}}-\frac{77\pi^{8}}{10368z^{15}}-\frac{13\pi^{10}}{3456z^{19}} -\frac{61\pi^{12}}{31104z^{23}},\\[9pt]
   y_2&=z-\frac{\pi^{2}}{6z^{3}}-\frac{\pi^{4}}{24z^{7}}-\frac{7\pi^{6}}{432z^{11}}-\frac{77\pi^{8}}{10368z^{15}}-\frac{77\pi^{10}}{20736z^{19}}
     -\frac{1463\pi^{12}}{746496z^{23}},\notag\\[9pt]
    i_1&=z+\frac{\pi^{2}}{6z^{3}}-\frac{\pi^{4}}{24z^{7}}+\frac{7\pi^{6}}{432z^{11}}-\frac{77\pi^{8}}{10368z^{15}}+\frac{77\pi^{10}}{20736z^{19}}
    -\frac{1463\pi^{12}}{746496z^{23}},\notag\\[9pt]
    i_2&=z+\frac{\pi^{2}}{6z^{3}}-\frac{\pi^{4}}{24z^{7}}+\frac{7\pi^{6}}{432z^{11}}-\frac{77\pi^{8}}{10368z^{15}}+\frac{77\pi^{10}}{20736z^{19}},\notag\\[9pt]
    j_1&=z+\frac{\pi^{2}}{3z^{3}}-\frac{\pi^{4}}{6z^{7}}+\frac{7\pi^{6}}{54z^{11}}-\frac{77\pi^{8}}{648z^{15}}
    +\frac{77\pi^{10}}{648z^{19}}-\frac{1463\pi^{12}}{11664z^{23}},\notag\\[9pt]
    j_2&=z+\frac{\pi^{2}}{3z^{3}}-\frac{\pi^{4}}{6z^{7}}+\frac{7\pi^{6}}{54z^{11}}-\frac{77\pi^{8}}{648z^{15}}+\frac{77\pi^{12}}{648z^{19}}.\notag
\end{align}
Next we turn to estimate $h_1$, $h_2$, $h_3$, $h_4$ and $h_5$, which indicates to estimate $F_{k}(n)$, $G_{k}(n)$, $H_{k}(n)$, $L_{k}(n)$ and $M_{k}(n)$. Due to this, we will continue with the equation \eqref{J} from Section \ref{3} and add the following two equations,
\begin{eqnarray*}
I_1^{2m+1}=\left\{ \begin{array}{ll}
i^{2m}i_1 &\textrm {$ m\equiv 0\left(mod~2\right)$}\\
i^{2m-2}i_1^3 &\textrm {$ m\equiv 1\left(mod~2\right)$}\\
\end{array}\right.,
I_2^{2m+1}=\left\{ \begin{array}{ll}
i^{2m}i_2&\textrm {$m\equiv 0\left(mod~2\right)$}\\
i^{2m-2}i_2^3&\textrm {$m\equiv 1\left(mod~2\right)$}\\
\end{array}\right..
\end{eqnarray*}
Then, to estimate $F_{k}(n)$, $G_{k}(n)$, $H_{k}(n)$, $L_{k}(n)$ and $M_{k}(n)$, we will make the same substitution as in Section \ref{3} illustrated in the following flow charts,
\begin{align*}
-F_{k2}(n)\Longleftarrow i_2^2y_2^2J_2^{2m} W_2^{2n}\stackrel{F_{k}^{-}}{\longleftarrow}&i^2y^2j^{2m} w^{2n}\stackrel{F_{k}^{+}}{\longrightarrow}i_1^2y_1^2J_1^{2m} W_1^{2n}\Longrightarrow F_{k1}(n)\\[9pt]
-G_{k2}(n)\Longleftarrow j_2w_2I_2^{2m+1}Y_2^{2n+1}\stackrel{G_{k}^{-}}{\longleftarrow}&jwi^{2m+1} y^{2n+1}\stackrel{G_{k}^{+}}{\longrightarrow}j_1w_1I_1^{2m+1}Y_1^{2n+1}\Longrightarrow G_{k1}(n)\\[9pt]
-H_{k2}(n)\Longleftarrow j_2y_2^2I_2^{2m}W_2^{2n}\stackrel{H_{k}^{-}}{\longleftarrow}&jy^2i^{2m} w^{2n}\stackrel{H_{k}^{+}}{\longrightarrow}j_1y_1^2I_1^{2m}W_1^{2n}\Longrightarrow H_{k1}(n)\\[9pt]
-L_{k2}(n)\Longleftarrow i_2^2w_2J_2^{2m}Y_2^{2n}\stackrel{L_{k}^{-}}{\longleftarrow}&i^2wj^{2m} y^{2n}\stackrel{L_{k}^{+}}{\longrightarrow}i_1^2w_1J_1^{2m}Y_1^{2n}\Longrightarrow L_{k1}(n)\\[9pt]
-M_{k2}(n)\Longleftarrow i_2^2j_2w_2y_2^2\stackrel{M_{k}^{-}}{\longleftarrow}&i^2jwy^2\stackrel{M_{k}^{+}}{\longrightarrow}i_1^2j_1w_1y_1^2\Longrightarrow M_{k1}(n)
\end{align*}





Combining \eqref{F,M} and \eqref{Ineq W,j} leads to
\begin{eqnarray}\label{Ineq f,g,h,L,M}
\begin{split}
	F_{k}(n)&>F_{k1}(n)-F_{k2}(n),&~~~~~~
    G_{k}(n)&>G_{k1}(n)-G_{k2}(n),\\[9pt]
    H_{k}(n)&>H_{k1}(n)-H_{k2}(n),&~~~~~~
    L_{k}(n)&>L_{k1}(n)-L_{k2}(n),\\[9pt]
    M_{k}(n)&>M_{k1}(n)-M_{k2}(n).
    \end{split}
\end{eqnarray}

Now we proceed to estimate $e^{\sqrt{\alpha_k}\left(j^2+w^2-2z^2\right)}$, $e^{\sqrt{\alpha_k}\left(i^2+y^2-2z^2\right)}$, $e^{\sqrt{\alpha_k}\left(2i^2+w^2-3z^2\right)}$ and $e^{\sqrt{\alpha_k}\left(j^2+2y^2-3z^2\right)}$. By \eqref{Ineq W,j}, one can obtain that for $n\geq 2404$,
\begin{eqnarray}
\begin{split}
    j_1^2+w_1^2-2z^2&<j^2+w^2-2z^2<j_2^2+w_2^2-2z^2,\\[9pt]
    i_1^2+y_1^2-2z^2&<i^2+y^2-2z^2<i_2^2+y_2^2-2z^2,\\[9pt]
    2i_1^2+w_1^2-3z^2&<2i^2+w^2-3z^2<2i_2^2+w_2^2-3z^2,\\[9pt]
    j_1^2+2y_1^2-3z^2&<j^2+2y^2-3z^2<j_2^2+2y_2^2-3z^2,
    \end{split}
\end{eqnarray}
    which implies that
\begin{eqnarray}\label{2Ineq e}
\begin{split}
    e^{\sqrt{\alpha_k}\left(j_1^2+w_1^2-2z^2\right)}&<e^{\sqrt{\alpha_k}\left(j^2+w^2-2z^2\right)}<e^{\sqrt{\alpha_k}\left(j_2^2+w_2^2-2z^2\right)},\\[9pt]
    e^{\sqrt{\alpha_k}\left(i_1^2+y_1^2-2z^2\right)}&<e^{\sqrt{\alpha_k}\left(i^2+y^2-2z^2\right)}<e^{\sqrt{\alpha_k}\left(i_2^2+y_2^2-2z^2\right)},\\[9pt]
	e^{\sqrt{\alpha_k}\left(2i_1^2+w_1^2-3z^2\right)}&<e^{\sqrt{\alpha_k}\left(2i^2+w^2-3z^2\right)}<e^{\sqrt{\alpha_k}\left(2i_2^2+w_2^2-3z^2\right)},\\[9pt]
    e^{\sqrt{\alpha_k}\left(j_1^2+2y_1^2-3z^2\right)}&<e^{\sqrt{\alpha_k}\left(j^2+2y^2-3z^2\right)}<e^{\sqrt{\alpha_k}\left(j_2^2+2y_2^2-3z^2\right)}.
    \end{split}
\end{eqnarray}
Moreover, a direct calculation by Mathematica leads to $j_2^2+w_2^2-2z^2$, $i_2^2+y_2^2-2z^2$, $2i^2+w^2-3z^2$ and $j^2+2y^2-3z^2$ are all negative for all $n\geq 1$. Here we omit their tedious expansions. Thus, applying \eqref{Phi-phi} to \eqref{2Ineq e} , we get that for $n\geq 2404$,
\begin{eqnarray}\label{2Ineq Phi-e-phi}
 \begin{split}
 \phi \left(\sqrt{\alpha_k}\left(j_1^2+w_1^2-2z^2\right)\right)&<e^{\sqrt{\alpha_k}\left(j^2+w^2-2z^2\right)}<\Phi \left(\sqrt{\alpha_k}\left(j_2^2+w_2^2-2z^2\right)\right),\\[9pt]
    \phi \left(\sqrt{\alpha_k}\left(i_1^2+y_1^2-2z^2\right)\right)&<e^{\sqrt{\alpha_k}\left(i^2+y^2-2z^2\right)}<\Phi \left(\sqrt{\alpha_k}\left(i_2^2+y_2^2-2z^2\right)\right),\\[9pt]
	\phi \left(\sqrt{\alpha_k}\left(2i_1^2+w_1^2-3z^2\right)\right)&<e^{\sqrt{\alpha_k}\left(2i^2+w^2-3z^2\right)}<\Phi \left(\sqrt{\alpha_k}\left(2i_2^2+w_2^2-3z^2\right)\right),\\[9pt]
    \phi \left(\sqrt{\alpha_k}\left(j_1^2+2y_1^2-3z^2\right)\right)&<e^{\sqrt{\alpha_k}\left(j^2+2y^2-3z^2\right)}<\Phi \left(\sqrt{\alpha_k}\left(j_2^2+2y_2^2-3z^2\right)\right).
    \end{split}
 \end{eqnarray}

Now, we proceed to prove \eqref{key proof}. With similar arguments used in Section \ref{3}, we let
\begin{align}
A(z)=&h_1e^{\sqrt{\alpha_k}\left(j^2+w^2-2z^2\right)}+h_2e^{\sqrt{\alpha_k}\left(i^2+y^2-2z^2\right)}\notag\\[9pt]
&+h_3e^{\sqrt{\alpha_k}\left(2i^2+w^2-3z^2\right)}+h_4e^{\sqrt{\alpha_k}\left(j^2+2y^2-3z^2\right)}+h_5,
\end{align}
we need to show the positivity of $A(z)$. By employing \eqref{F,M}, \eqref{Ineq f,g,h,L,M} and \eqref{2Ineq Phi-e-phi}, we get that for $n\geq 3512$,
\begin{equation*}
  \begin{split}
  \resizebox{0.9999\hsize}{!}{$\begin{aligned}
A(z)&>\left[F_{k1}(n)\phi\left(\sqrt{\alpha_k} \left(j_1^2+w_1^2-2z^2\right)\right)-F_{k2}(n)\Phi\left(\sqrt{\alpha_k} \left(j_2^2+w_2^2-2z^2\right)\right)\right]i^{96}y^{96}z^{98}\beta(j)\beta(w)\beta(z)\\[9pt]
&+\left[G_{k1}(n) \phi\left(\sqrt{\alpha_k} \left(i_1^2+y_1^2-2z^2\right)\right)-G_{k2}(n)
\Phi\left(\sqrt{\alpha_k} \left(i_2^2+y_2^2-2z^2\right)\right)\right]i^{48}j^{48}w^{48}y^{48}z^{98}\beta(i)\beta(y)\beta(z)\\[9pt]
&+\left[H_{k1}(n)\phi\left(\sqrt{\alpha_k}\left(2i_1^2+w_1^2-3z^2\right)\right)-H_{k2}(n)\Phi\left(\sqrt{\alpha_k} \left(2i_2^2+w_2^2-3z^2\right)\right)\right]j^{48}y^{96}z^{147}\gamma(i)^2\gamma(w)\\[9pt]
&+\left[L_{k1}(n)\phi\left(\sqrt{\alpha_k}\left(j_1^2+2y_1^2-3z^2\right)\right)-L_{k2}(n)
\Phi\left(\sqrt{\alpha_k} \left(j_2^2+2y_2^2-3z^2\right)\right)\right]i^{96}w^{48}z^{147}\gamma(j)\gamma(y)^2\\[9pt]
&+\left[M_{k1}(n)-M_2(n)\right]i^{96}j^{48}w^{48}y^{96}\gamma(z)^3.
\end{aligned}$}  
  \end{split}
\end{equation*}
Denote the right-hand side of the above inequality as $A_1(z)$. Applying \eqref{Eq w,y,i,j}, \eqref{Eq phi-Phi}, \eqref{alpha}, \eqref{F,M}  and \eqref{x_i} to $A_1(z)$, by Mathematica, we can restate $A_1(z)$ as
\begin{equation}
A_1(z)={\sum_{j=0}^{362} a_j(k) z^{2j}\over 2^{143} 3^{169} 5z^{316}},
\end{equation}
where $a_j(k)$ are known real numbers, and the first three terms $a_{360}(k)$, $a_{361}(k)$, $a_{362}(k)$ are listed below,
\begin{equation*}
 \begin{split}
a_{360}(k)&=2^{260} 3^{165} 5 \alpha_{k}^{\frac{35}{2}}\left(524880+2123\pi ^{12}\alpha_{k}\right),\\[9pt]
a_{361}(k)&=-2^{264} 3^{166} 5 \alpha_{k}^{18}\left(1944+7\pi ^{12}\alpha_{k}\right),\\[9pt]
a_{362}(k)&=2^{267} 3^{163} 5 \pi ^{12}\alpha_{k}^{\frac{39}{2}}.
 \end{split}
\end{equation*}
It is clear that $a_{360}(k)>0$, $a_{361}(k)<0$ and $a_{362}(k)>0$ for all $\alpha_{k}=\frac{5k+2}{2k+1}>0$.
Thus, for $n \geq 3512$, we have
\begin{equation}\label{Ineq A}
A(z)>{\sum_{j=0}^{362} a_j(k) z^{2j}\over 2^{143} 3^{169} 5z^{316}}.
\end{equation}
Obviously,
\begin{equation}\label{2Ineq A1}
\sum_{j=0}^{362}a_j(k)z^{2j}>\sum_{j=0}^{360}-|a_j(k)|z^{2j}+a_{361}z^{722}+a_{362}z^{724}.
\end{equation}
Moreover, it can be readily checked that for any $0\leq j \leq 360 $ and $ n\geq 10$,
 \begin{equation}
-|a_j(k)|z^{2j}>-a_{360}z^{720},
\end{equation}
it follows that for $ n\geq 10$,
 \begin{equation}\label{Ineq A1}
\sum_{j=0}^{360}a_j(k)z^{2j}>\left(-361a_{360}(k)+a_{361}(k)z^{2}+a_{362}(k)z^{4}\right)z^{720}.
\end{equation}
One can check that for $z> 12.65$, or equivalently, for $n\geq 3891$,
\begin{equation}
-361a_{360}(k)+a_{361}(k)z^{2}+a_{362}(k)z^{4}>0.
\end{equation}
Combining all these results above reveals that inequality \eqref{key proof} holds for $ n\geq 3891$, which implies that inequality \eqref{2Lag Ineq q(n)} holds for $ n\geq 3891$.

 On the other hand, numerical evidence shows that \eqref{2Lag Ineq q(n)} also holds for $18\leq n \leq 3891$. The proof is completed.
\qed
\end{proof}
\section{Open problems}\label{5}
The results above encourage us to consider the Laguerre inequality of any order and the determinantal inequality of any order for $\Delta_{k}(n)$. Numerical evidence suggests us to propose the following conjectures.
\begin{conj}
Let $a_n=\Delta_{k}(n)$, then for $n\geq 14$ and $k=1$ or $2$
\begin{align*}
I(a_n,a_{n+1},a_{n+2},a_{n+3},a_{n+4})&=A(a_n,a_{n+1},a_{n+2},a_{n+3},a_{n+4})^3\\[9pt]
&-27B(a_n,a_{n+1},a_{n+2},a_{n+3},a_{n+4})^2>0,\\[9pt]
\text{where}\quad A(a_n,a_{n+1},a_{n+2},a_{n+3},a_{n+4})&=a_{n}a_{n+4}-4a_{n+1}a_{n+3}+3a_{n+2}^2,\\[9pt]
B(a_n,a_{n+1},a_{n+2},a_{n+3},a_{n+4})&=-a_{n}a_{n+2}a_{n+4}+a_{n+2}^3+a_{n}a_{n+3}^2\\[9pt]
&+a_{n+1}^2a_{n+4}-2a_{n+1}a_{n+2}a_{n+3}.
\end{align*}
\end{conj}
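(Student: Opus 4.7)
The plan is to adapt the strategy of Sections \ref{3} and \ref{4} to the degree-six invariant $I = A^3 - 27 B^2$. Since $I$ involves products of six values of $\Delta_k$, I would first sharpen Theorem \ref{generalize} even further by retaining additional terms in the Bessel expansion of Lemma \ref{B_I}, so as to obtain two-sided bounds $\tilde{f}_k(n) \leq \Delta_k(n) \leq \tilde{g}_k(n)$ with error of order at most $1/x_k(n)^{r}$ for $r$ sufficiently large (likely $r \geq 14$). The proof mirrors that of Lemma \ref{B_I}, merely extending the Taylor expansion of $(2-u)^{3/2}$ and the Bessel exponent inequality \cite[(3.8)]{DJ} further; combined with keeping more terms in the Taylor expansion of each $u_k(n+\ell)$ about $z = u_k(n+2)$, this supplies the ingredient needed to control products of six $\Delta_k$-values.

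Second, each monomial appearing in $A^3$ and in $-27 B^2$ should be decomposed by sign. For a positive monomial one substitutes $\tilde{f}_k$ for each factor, and for a negative one $\tilde{g}_k$; collecting exponentials then writes the lower bound for $I$ as a finite sum
\[
\sum_{\ell} h_{\ell}(z)\, \exp\!\bigl(\sqrt{\alpha_k}\,\Lambda_{\ell}(w,y,z,i,j)\bigr),
\]
where each $\Lambda_{\ell}$ is an integer combination such as $i^2 + y^2 + j^2 + w^2 - 4z^2$ or $2i^2 + 2y^2 - 4z^2$, which one verifies to be negative for all $n \geq 1$, just as in Section \ref{4}. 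Applying the Taylor sandwich $\phi(t) < e^t < \Phi(t)$ of \eqref{Phi-phi} to each exponential reduces the lower bound to a rational function in $w, y, z, i, j$; substituting the \eqref{Ineq W,j}-type sharp estimates $w_1 \leq w \leq w_2$, etc., converts this into an expression of the form $A_1(z) = \sum_{j=0}^{N} a_j(k) z^{2j}/(C_k z^M)$. Inspecting the signs of the top few coefficients and bounding the remaining tail by the leading negative coefficient, as in the proofs of Theorems \ref{theorem1} and \ref{3jie-theorem}, should establish $I > 0$ for $z$ past an explicit threshold, after which a numerical sweep settles the remaining $14 \leq n \leq n_0$.

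The principal obstacle is the near-cancellation between $A^3$ and $27 B^2$. Under the leading asymptotic $\Delta_k(n) \sim M_k(n)$ these two quantities agree at top order, so the invariant $I$ is of strictly smaller order than either summand, and its positivity is governed entirely by subleading corrections in the asymptotic expansion of $\Delta_k(n)$. This forces the Bessel expansion of Lemma \ref{B_I} to be pushed substantially further to retain adequate precision, which in turn inflates the polynomial $A_1(z)$ to a symbolic expression with potentially thousands of monomials, so the execution rests heavily on automated computer algebra. A secondary difficulty is the combinatorial bookkeeping of signs when expanding $A^3 - 27B^2$ as a polynomial in the $a_{n+j}$'s and of the many distinct exponential arguments $\Lambda_{\ell}$ that arise.
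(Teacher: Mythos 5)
This statement is not proved in the paper at all: it is the first conjecture of Section \ref{5}, posed as an open problem on the basis of numerical evidence, so there is no proof of the paper's to compare yours against, and any submission here must stand entirely on its own. Yours does not: it is a research plan, not a proof. Every step that would constitute the actual argument is deferred --- the sharpened analogues of Theorem \ref{generalize} and Lemma \ref{B_I} are not derived, the sign decomposition of $A^3-27B^2$ (a degree-six polynomial in the $a_{n+j}$'s, so each exponent $\Lambda_\ell$ involves six squares minus $6z^2$, not four minus $4z^2$ as in your examples) is not carried out, the negativity of the $\Lambda_\ell$ is not verified, the polynomial $A_1(z)$ is never computed, no explicit threshold $n_0$ is produced, and no numerical check on $14\leq n\leq n_0$ is performed. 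The methodology you describe is indeed the natural extension of Sections \ref{3} and \ref{4}, but as submitted it establishes nothing.

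There is also a substantive error in your analysis of the difficulty, and it matters because it calibrates how much precision the method needs. You claim $A^3$ and $27B^2$ agree at top order, so that $I$ is of strictly smaller order than either summand. This is false. In the local model $\log a_{n+j}\approx \log a_n+jt-\tfrac{j^2}{2}s$ with $s>0$ the small negative second difference of $\log \Delta_k$, one computes
\begin{align*}
A &\approx a_n^2e^{4t}\left(e^{-8s}-4e^{-5s}+3e^{-4s}\right)\approx 6\,a_n^2e^{4t}s^2,\\
B &\approx a_n^3e^{6t}\left(-e^{-10s}+e^{-6s}+2e^{-9s}-2e^{-7s}\right)\approx 2\,a_n^3e^{6t}s^3,
\end{align*}
so $A^3\approx 216\,a_n^6e^{12t}s^6$ while $27B^2\approx 108\,a_n^6e^{12t}s^6$; hence $I\approx\tfrac12A^3$, the \emph{same} order as $A^3$, with no further cancellation. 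The true constraint is instead that $A$ and $B$ must each be controlled to bounded relative error, and since they are themselves suppressed by $s^2\sim n^{-3}$ and $s^3\sim n^{-9/2}$ relative to $a_n^2$ and $a_n^3$, this requires two-sided bounds on $\Delta_k(n)$ with relative error $o(n^{-9/2})$ --- roughly the precision the paper already attains in Theorem \ref{generalize} and Lemma \ref{B_I}, perhaps one or two orders more, rather than the far more drastic sharpening your (incorrect) cancellation picture led you to demand. Correcting this estimate is a prerequisite to setting up the symbolic computation at all; carrying that computation out is then the entire remaining content of a proof.
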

Recall that $A>0$, $B>0$ are equivalent to Theorem \ref{theorem1} and Theorem \ref{3jie-theorem}, respectively.
\begin{conj}
For $k=1$ or $2$ and $1 \leq m \leq 14$, the broken $k$-diamond partition function $\Delta_{k}(n)$ satisfies the Laguerre inequality of order $m$ for $n \geq N_{\Delta_{k}}(m)$, where
\begin{table}[H]
  \centering
   \renewcommand{\arraystretch}{1.5}
    \setlength{\tabcolsep}{3pt}
\begin{tabular}{ccccccccccccccc}
\hline
$m$ & 1 & 2 & 3 & 4 & 5 & 6 & 7 & 8 & 9 & 10 & 11 & 12 & 13 & 14\\
\hline
$N_{\Delta_{1}}(m)$ & 1 & 12 & 53 & 132 & 251 & 420 & 639 & 912 & 1245 & 1636 & 2091 & 2612 & 3201 & 3858\\
\hline
$N_{\Delta_{2}}(m)$ & 1 & 10 & 45 & 106 & 211 & 354 & 539 & 774 & 1059 & 1398 & 1781 & 2240 & 2749 & 3318\\
\hline
\end{tabular}
\end{table}
\end{conj}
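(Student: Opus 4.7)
The plan is to extend the asymptotic-bounds-plus-computer-verification machinery of Sections \ref{3} and \ref{4} to the Laguerre inequality $L_m(\Delta_k(n)) \geq 0$ for each fixed $m \in \{1,\ldots,14\}$ and $k \in \{1,2\}$. Writing
\begin{equation*}
L_m(\Delta_k(n)) = \tfrac{1}{2}\sum_{j=0}^{2m}(-1)^{j+m}\binom{2m}{j}\Delta_k(n+j)\Delta_k(n+2m-j)
\end{equation*}
and substituting bounds $\tilde f_k(n) \leq \Delta_k(n) \leq \tilde g_k(n)$ of the same shape as in Section \ref{4}, the inequality $L_m(\Delta_k(n)) > 0$ reduces to the positivity of a weighted sum of $m+1$ exponentials whose coefficients are polynomials in the auxiliary variables $u_k(n), u_k(n+1),\ldots, u_k(n+2m)$.

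Before that, one has to produce sharper analogues of Theorem \ref{generalize} and Lemma \ref{B_I}. For the $m$-th Laguerre inequality one needs
\begin{equation*}
M_k(n)\bigl(1 - x_k(n)^{-N_1(m)}\bigr) \leq \Delta_k(n) \leq M_k(n)\bigl(1 + x_k(n)^{-N_1(m)}\bigr),
\end{equation*}
together with $I_2(s) = \tfrac{e^s}{\sqrt{2\pi s}}\bigl(\bar B_{I,m}(s) \pm C_m s^{-N_2(m)}\bigr)$, where $N_1(m)$ and $N_2(m)$ grow with $m$ to compensate for the cancellation built into $L_m$. Both extensions come from keeping more terms in the Taylor expansion of $(2-u)^{3/2}$ at equation \eqref{I_22} (with larger $N_1$, $N_2$ the truncation constants $27$, $6148836$ must be recomputed) and from the analogous enlargement described after Theorem \ref{generalize}. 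Calibrating $N_1(m)$ and $N_2(m)$ so that the error terms are strictly dominated by the leading behavior of $L_m$ is the main quantitative input to the whole scheme.

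With these in hand I would follow the monomial-by-monomial ``flow-chart'' substitution of Section \ref{3}: split each polynomial factor into its positive and negative parts, replace each occurrence of $u_k(n+j)$ by the appropriate lower or upper Taylor-tail bound analogous to \eqref{w_j} (sharpened to include enough correction terms that the resulting inequality survives), and then apply the Taylor-polynomial sandwich $\phi(t)<e^t<\Phi(t)$ to every exponential, with $\phi$, $\Phi$ of degree chosen large enough for the arguments involved. Clearing a manifestly positive denominator (the analogue of $h_3$ in Section \ref{3} or $h_5$ in Section \ref{4}, whose positivity is checked directly by Mathematica), one arrives at a polynomial $A_1(z)$ in $z = u_k(n+m)$ whose leading coefficients in $\alpha_k$ and $\pi$ can be read off symbolically. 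Positivity of $A_1(z)$ for all sufficiently large $z$ is then confirmed by verifying that the top coefficient is positive and bounding the lower-order coefficients in absolute value by the top one, exactly as in \eqref{Ineq A1}. Finally, the remaining range $n_0(m) \leq n \leq$ threshold is closed by direct high-precision numerical evaluation of $L_m(\Delta_k(n))$ from the generating function.

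The hard part will be twofold. First, the symbolic polynomial $A_1(z)$ already had degree $386$ for $m=2$ and degree $724$ for the $3\times 3$ determinant; for $m=14$ the degree and number of monomials explode, and coaxing Mathematica to confirm the sign of the leading block of coefficients (which are polynomial in $\alpha_k$ and $\pi$) is the principal bottleneck. Second, the sharpness claim --- that the true threshold is exactly $N_{\Delta_k}(m)$ rather than merely some larger $n_0(m)$ --- forces the asymptotic argument to be tight, and presently neither Lemma \ref{lemm 1} nor Lemma \ref{B_I} are sharp enough for $m\geq 6$ or so; the gap between the analytic threshold and the conjectured $N_{\Delta_k}(m)$ widens with $m$, which is why a uniform-in-$m$ treatment in the spirit of Wagner \cite{Wagner} or Griffin--Ono--Rolen--Zagier \cite{Zagier}, combined with effective versions à la Griffin et al.\ \cite{Griffin} and Dou--Wang \cite{dw}, seems to be the only realistic route to handling all $m\leq 14$ simultaneously.
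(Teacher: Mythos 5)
The first thing to say is that the statement you were given is not a theorem of the paper at all: it is one of the open conjectures in Section \ref{5}, proposed on the basis of numerical evidence only. The authors, who have the full machinery of Theorem \ref{generalize}, Lemma \ref{lemm 1} and Lemma \ref{B_I} at their disposal, prove only the $m=2$ case (Theorem \ref{theorem1}) and the $3\times 3$ determinantal inequality (Theorem \ref{3jie-theorem}); everything for $3\leq m\leq 14$, and in particular the exact thresholds $N_{\Delta_{k}}(m)$, is left open. So there is no proof in the paper to compare yours against, and the only relevant question is whether your proposal stands as a proof on its own. It does not: it is a research program, and one whose critical steps you yourself flag as unresolved.

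Concretely, three gaps. First, you never construct the sharper analogues of Theorem \ref{generalize} and Lemma \ref{B_I}: you introduce unspecified exponents $N_1(m)$, $N_2(m)$ and constants $C_m$ and say they must be ``calibrated'', but this calibration --- proving that the truncation error in the Bessel expansion can be pushed below the cancellation built into $L_m$, which for the paper's method must hold with explicit constants on an explicit range of $s$ --- is the actual mathematical content, and it is precisely what becomes hard as $m$ grows. Second, the endgame of the paper's method hinges on the three leading coefficients of the cleared polynomial (the analogues of $a_{191}(k),a_{192}(k),a_{193}(k)$ in Section \ref{3} and of $a_{360}(k),a_{361}(k),a_{362}(k)$ in Section \ref{4}) having a particular sign pattern and dominating all lower-order terms; you assert the analogue ``can be read off symbolically'' for every $m\leq 14$, but you give no reason such a pattern persists, and no control on the lower-order coefficients for large $m$, where the polynomial has degree in the thousands. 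Third, and decisively, your final paragraph concedes that Lemma \ref{lemm 1} and Lemma \ref{B_I} are not sharp enough for $m\geq 6$ and that only a uniform-in-$m$ treatment in the spirit of Wagner \cite{Wagner} ``seems to be the only realistic route'' --- an acknowledgment that the plan as written does not close for most of the range $1\leq m\leq 14$, to say nothing of hitting the exact conjectured thresholds (note, for instance, that even at $m=2$ the conjectured threshold $N_{\Delta_{2}}(2)=10$ is finer than the $n\geq 12$ statement actually proved in Theorem \ref{theorem1}). The honest conclusion is that the statement remains open, exactly as the paper presents it.
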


\begin{conj}
For $k=1$ or $2$ and $1 \leq m \leq 14$, the broken $k$-diamond partition function $\Delta_{k}(n)$ satisfies $\det (\Delta_{k}(n-i+j))_{1\leq i,j\leq m}$ for $n \geq \mathbf{N}_{\Delta_{k}}(m)$, where
\begin{table}[H]
  \centering
   \renewcommand{\arraystretch}{1.5}
    \setlength{\tabcolsep}{3.5pt}
\begin{tabular}{ccccccccccccccc}
\hline
$m$ & 1 &~ 2 & 3 & 4 & 5 & 6 & 7 & 8 & 9 & 10 & 11 & 12 & 13 & 14\\
\hline
$\mathbf{N}_{\Delta_{1}}(m)$ & 1 & 1 & 20 & 84 & 194 & 362 & 594 & 890 & 1258 & 1700 & 2218 & 2818 & 3498 & 4264\\
\hline
$\mathbf{N}_{\Delta_{2}}(m)$ & 1 & 1 & 18 & 72 & 168 & 308 & 506 & 762 & 1082 & 1464 & 1914 & 2436 & 3028 & 3696\\
\hline
\end{tabular}
\end{table}
\end{conj}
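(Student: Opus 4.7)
The plan is to extend the framework of Sections \ref{3} and \ref{4} to all $m\leq 14$ by combining three ingredients: a hierarchy of Bessel-function bounds of increasing precision, the iterated Hankel-determinant expansion of Wang and Yang \cite{wang3}, and the $\phi/\Phi$ sandwich technique already used for the exponential factors. The target is an asymptotic statement ``the $m\times m$ Hankel determinant is positive for $n\geq N_0(m)$'' for some explicit $N_0(m)$, followed by direct numerical verification on the remaining window $\mathbf{N}_{\Delta_k}(m)\leq n<N_0(m)$.

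First, I would generalize Lemma \ref{lemm 1} and Lemma \ref{B_I} to a family of bounds
\[
\frac{e^{s}}{\sqrt{2\pi s}}\Bigl(B_I^{(m)}(s)-\frac{C_m}{s^{N(m)}}\Bigr)\leq I_2(s)\leq \frac{e^{s}}{\sqrt{2\pi s}}\Bigl(B_I^{(m)}(s)+\frac{C_m}{s^{N(m)}}\Bigr),
\]
where $B_I^{(m)}$ is the truncated asymptotic expansion of $I_2$ kept to as many terms as needed so that $N(m)$ exceeds the growth rate of the error accumulated inside an $m\times m$ determinant. The proof is identical in spirit to Section \ref{2}: expand $(2-u)^{3/2}$ by Taylor's theorem to a deeper order, split $\int_0^1=\int_0^\infty-\int_1^\infty$ using $\Gamma(a)$ and $\Gamma(a,s)$, and control the remainder via Pinelis's bound \eqref{G,s}. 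In parallel, Theorem \ref{generalize} is strengthened to
\[
M_k(n)\bigl(1-x_k(n)^{-N(m)}\bigr)\leq\Delta_k(n)\leq M_k(n)\bigl(1+x_k(n)^{-N(m)}\bigr)
\]
by replacing $1/(2x_k^6)$ in \cite[(3.8)]{DJ} with $1/(2x_k^{N(m)})$; everything else is unchanged.

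Second, I would invoke the iterated expansion of Wang--Yang \cite{wang3}, which writes the $m\times m$ Hankel determinant as a known integer combination of products $\Delta_k(n-1+j_1)\cdots \Delta_k(n-1+j_m)$ with $2m-1$ consecutive indices. Substituting $\tilde f_{k,m}$ for each factor with a positive-sign monomial and $\tilde g_{k,m}$ for each negative-sign one (the multi-index analogue of the $F_k^\pm, G_k^\pm, H_k^\pm, L_k^\pm, M_k^\pm$ dichotomy used in Section \ref{4}), clearing the common prefactor $(\alpha_k\pi^3/18)^m$ and a power of $I_{2}$ evaluated at the central shift, yields an inequality of the shape
\[
\sum_{J} c_J(k)\, P_J(u_k(n),\ldots,u_k(n+2m-2))\, e^{\sqrt{\alpha_k}\,Q_J(u^2_\bullet)}+(\text{exact part})>0,
\]
where each $Q_J$ is a linear combination of the $u_k(n+c)^2$ which by Taylor expansion is negative (hence amenable to $\phi(t)<e^t<\Phi(t)$).

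Third, the Taylor expansions of $u_k(n+c)$ around $z=u_k(n+m-1)$ to order $z^{-(4m+3)}$ (as extrapolated from \eqref{x_i}) pinch each $u_k(n+c)$ between explicit rational functions of $z$. Plugging these into the inequality above and collecting in $z$ with \textsc{Mathematica} produces a univariate polynomial $\sum_{j=0}^{D(m)} a_j(k)z^{2j}$; positivity is then confirmed by checking the leading three coefficients have signs $(+,-,+)$ and bounding all lower-order $|a_j(k)|z^{2j}$ by the leading term times a small integer, exactly as in \eqref{Ineq A1} and \eqref{2Ineq A1}. This produces the analytic threshold $N_0(m)$; the remaining window is closed by direct numerical computation.

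The main obstacle is computational complexity. The polynomial $P_J$ has total degree of order $m$, the number of multi-indices $J$ grows like $m!$, and the degree $D(m)$ of the final univariate polynomial grows like $m^2$; for $m=14$ even storing the symbolic output of Wang--Yang's iterated expansion strains reasonable memory limits. Secondary difficulties are choosing $N(m)$ large enough so that the dominant error term in the determinant does not swamp the leading coefficient (this is why $m=2$ used $I_{21},I_{22}$ but $m=3$ already required the sharper $I_{23},I_{24}$), and the fact that the analytic threshold $N_0(m)$ from this argument is typically an order of magnitude larger than the conjectured sharp $\mathbf{N}_{\Delta_k}(m)$, so the numerical verification window is substantial and must be performed with high-precision arithmetic because the determinant is a small difference of very large numbers near the threshold.
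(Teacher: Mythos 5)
There is nothing in the paper to compare your proposal against: the statement you were asked about is one of the paper's \emph{open problems} (Section \ref{5}), stated as a conjecture supported only by numerical evidence --- the author proves the determinantal inequality only for $m=3$ (Theorem \ref{3jie-theorem}) and explicitly leaves $4\leq m\leq 14$ unproved. So the real question is whether your outline constitutes a proof, and it does not; it is a research program in which every decisive step is deferred. Concretely: (i) you never establish that the exponents $Q_J$ arising in the Leibniz/iterated expansion of the $m\times m$ Hankel determinant are negative for general $m$ --- for $m=3$ the paper had to verify four such negativity claims by explicit computation of the expansions, and there is no structural argument given (by you or in the paper) that this persists for all $m\leq 14$; (ii) the sign pattern $(+,-,+)$ of the three leading coefficients $a_{D(m)-2},a_{D(m)-1},a_{D(m)}$ and the domination of all lower-order terms by the leading one are asserted to be ``checked as in \eqref{Ineq A1}'', but these are $k$- and $m$-dependent facts that must actually be computed, and you concede the $m=14$ symbolic computation may be infeasible --- a proof that cannot be executed is not a proof; (iii) the exact thresholds $\mathbf{N}_{\Delta_k}(m)$ in the table can only be confirmed by a finite verification below an effective $N_0(m)$, which your argument never produces.

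There is also a quantitative gap you gesture at but do not resolve: the main term of an $m\times m$ Hankel determinant of $\Delta_k$ is suppressed by a high power of $x_k(n)$ relative to the individual products $\prod\Delta_k(n+c_i)$ (already for $m=3$ the paper needed the error $x_k(n)^{-6}$ of Theorem \ref{Dong, Ji and Jia} sharpened to $x_k(n)^{-10}$ in Theorem \ref{generalize}, and the Bessel truncation pushed from six terms in Lemma \ref{lemm 1} to twelve in Lemma \ref{B_I}). For your scheme to close, you must show that the truncation order $N(m)$ can be chosen so that the accumulated error inside the determinant is strictly smaller than its main term, with constants like $C_m$ growing (compare $27$ versus $6148836$ between the two lemmas) while the admissible range $s\geq s_0(m)$ stays small enough for the finite check to be feasible. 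You name this tension but give no estimate resolving it, and it is exactly the obstruction at which the state of the art stops: even for $p(n)$, Laguerre thresholds are known only for $m\leq 9$ (Dou--Wang \cite{dw}) and determinant positivity only to order $4$ (Wang--Yang \cite{wang3}). Your outline is a sensible extrapolation of the paper's method, but as it stands the conjecture remains open, and your proposal correctly identifies --- without overcoming --- the reasons why.
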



\begin{thebibliography}{99}

\bibitem{Abramowitz} M. Abramowitz and I.A. Stegun (Eds.), Handbook of Mathematical Functions with Formulas, Graphs, and Mathematical Tables, $10$th printing, United States Department of Commerce, National Bureau of Standards, 1972.

\bibitem{Andrews} G.E. Andrews and P. Paule, Macmahon's partition analysis XI: broken diamonds and modular forms, Acta Arith., 126 (2007), 281--294.







\bibitem{Cardon} D.A. Cardon and A. Rich, Tur\'{a}n inequalities and subtraction-free expressions, JIPAM. J. Inequal. Pure Appl. Math., 9 (4) (2008), Artical 91, 11 pp.

\bibitem{Chan} S. H. Chan, Some congruences for Andrews-Paule's broken $2$-diamond partitions, Discrete Math. 308 (2008), 5735--5741.



\bibitem{Chen3} W.Y.C. Chen, The spt-Function of Andrews, Sueveys in combinatorics, London Math. Soc. Lecture Note Ser., 440, Cambridge Univ. Press, Cambridge, 2017, 141--203.


\bibitem{Chen-Jia-Wang-2017} W.Y.C. Chen, D.X.Q. Jia and L.X.W. Wang, Higher order Tur\'an inequalities for the partition function, Trans. Amer. Math. Soc.,  372
(2019), 2143--2165.


\bibitem{Craig} W. Craig and A. Pun, A note on the higher order Tur\'{a}n inequalities for k-regular partitions, Res. Number Theory 7 (2021) Paper No. 5, 7 pp.

\bibitem{Craven} T. Craven and G. Csordas, Jensen polynomials and the Tur\'{a}n and Laguerre inequalities, Pacific J. Math., 136 (2) (1989), 241--260. 

\bibitem{Csordas5} T. Craven and G. Csordas, Iterated Laguerre and Tur\'{a}n inequalities, JIPAM. J. Inequal. Pure Appl. Math., 3 (3) (2002), Artical 39, 14 pp. 






\bibitem{Csordas} G. Csordas, T.S. Norfolk and R.S. Varga, The Riemann hypothesis and the Tur\'{a}n inequalities, Trans. Amer. Math. Soc., 296 (2) (1986), 521--541. 

\bibitem{Csordas2} G. Csordas and R.S. Varga, Necessary and sufficient conditions and the Riemann hypothesis, Adv. in Appl. Math., 11 (3) (1990), 328--357. 




\bibitem{Desalvo} S. DeSalvo and I. Pak, Log-concavity of the partition function, Ramanujan J., 38 (1) (2015), 61--73. 


\bibitem{K.Dilcher} K. Dilcher and K. B. Stolarsky, On a class of nonlinear differential operators acting on polynomials, J. Math. Anal. Appl., 170 (1992), 382-400. 

\bibitem{Dimitrov} D.K. Dimitrov, Higher order Tur\'{a}n inequalities, Proc. Amer. Math. Soc., 126 (7) (1998), 2033--2037.

\bibitem{Dimitrov2} D.K. Dimitrov and F.R. Lucas, Higher order Tur\'{a}n inequalities for the Riemann $\xi$-function, Proc. Amer. Math. Soc., 139 (p3) (2011), 1013--1022. 

\bibitem{DJ} J.J.W. Dong and K.Q. Ji, Higher Order Tur\'an Inequalities for the Distinct Partition Function, 2023, arXiv:2303.05243.

\bibitem{Dong} J.J.W. Dong, K.Q. Ji and D.X.Q. Jia, Tur\'an inequalities for the broken k-diamond partition function, Ramanujan J. (2023).

\bibitem{dw} L.M. Dou and L.X.W. Wang, Higher order Laguerre inequalities for the partition function, Discrete Math, 346 (2023), 113366.


\bibitem{W.H.Foster} W. H. Foster and I. Krasikov, Inequalities for real-root polynomials and entire functions, 
    Adv. Appl. Math., 29 (1) (2002), 102--114. 

\bibitem{Zagier}  M. Griffin, K. Ono, L. Rolen and D. Zagier, Jensen polynomials for the Riemann zeta function and other sequences, Proc. Natl. Acad. Sci. USA, 116 (23) (2019), 11103--11110. 

\bibitem{Griffin} M. Griffin, K. Ono, L. Rolen, J. Thorner, Z. Tripp and I. Wagner, Jensen Polynomials for the Riemann Xi Function, Adv. Math., 397 (2022), 108186.




\bibitem{Hirschhorn} M. D. Hirschhorn, Broken $2$-diamond partitions modulo $5$, Ramanujan J. 45 (2018), 517--520.

\bibitem{Hirschhorn2} M. D. Hirschhorn and J. A. Sellers, On recent congruence results of Andrews and Paule for broken $k$-diamonds, Bull. Austral. Math. Soc. 75 (2007), 121--126.

\bibitem{hz} Q. Hou and Z. Zhang,  $r$-log-concavity of partition functions, Ramanujan J., 48 (1) (2019), 117--129. 



\bibitem{Jameson} M. Jameson, Congruences for broken $k$-diamond partitions, Ann. Comb., 17 (2013),333--338.


\bibitem{Jia} D.X.Q. Jia, Inequalities for the broken $k$-diamond partition function, J. Number Theory, to appear.

\bibitem{Jia-Wang-2018} D.X.Q. Jia and L.X.W. Wang, Determinantal inequalities of partition function, Proc. Royal Soc. Edinb. A, 150 (2020), 1451--1466.





\bibitem{larson} H. Larson and I. Wagner, Hyperbolicity of the partition Jensen polynomials, Res.  Number Theory, 5 (2019), page 1 of 12. 



\bibitem{Levin} B. Ja. Levin, Distribution of Zeros of Entire Functions, 
    Gosudarstv. Izdat. Tehn.-Teor. Lit., Moscow, 1956, 632 pp.


\bibitem{Nicolas} Nicolas and Jean-Louis, Sur les entiers N pour lesquels il y a beaucoup de groupes ab\'{e}liens $d^{\prime}$ordre N, Amarican Mathematical Society, 28 (4) (1978), 1--16.




\bibitem{Paule} P. Paule and S. Radu, Infinite families of strange partition congruences for broken $2$-diamonds, Ramanujan J. 23 (2010) 409--416.

\bibitem{Pinelis} I. Pinelis, Exact lower and upper bounds on the incomplete gamma function, Math. Inequal. Appl. 23 (4) (2020) 1261--1278.

\bibitem{Polya} G. P\'{o}lya, \"{U}ber die algebraisch-funktionentheoretischen Untersuchungen von J. L. W. V. Jensen, Kgl. Danske Vid. Sel. Math.-Fys. Medd. 7 (1927) 3--33.



\bibitem{Rahman} Q.I. Rahman and G. Schmeisser, Analytic theroy of polynomials, Oxford University Press, Oxford, 2002. xiv+742 pp. 






\bibitem{Wagner} I. Wagner, On a new class of Laguerre-P\'{o}lya type functions with applications in number theory, Pacific J. Math. 320 (2022) 177--192.

\bibitem{wang2} L.X.W. Wang and E.Y.Y. Yang, Laguerre inequalities for discrete sequences, Adv. Appl. Math. 139 (2022), 102357.

\bibitem{wang4} L.X.W. Wang and E.Y.Y. Yang, Laguerre inequality and determinantal inequality for the distinct partition function, submitted.

\bibitem{wang3} L.X.W. Wang and N.N.Y. Yang, Positivity of the determinants of the partition function and the overpartition function, Math. Comput. 341 (92) (2023), 1383--1402.
    
\bibitem{Xiong} X. Xiong, Two congruences involving Andrews-Paule's broken $3$-diamond partitions and $5$-diamond partitions, Proc. Japan Acad. Ser. A Math. Sci. 87 (2011), 65--68.

\end{thebibliography}
\end{document}